\theoremstyle{plain}
\newtheorem{theorem}{Theorem}[section]
\newtheorem{prop}[theorem]{Proposition}
\newtheorem{lemma}[theorem]{Lemma}
\theoremstyle{definition}
\newtheorem{definition}[theorem]{Definition}
\def\RR{\Bbb R}
\def \R   {\mathbb{R}^n}
\def \UHRN {X\times (0,+\infty)}
\def \HMAX {H^p_{L,max}(X)}
\title[Maximal function characterizations for  Hardy spaces]
{Maximal function characterizations for  Hardy spaces\\  associated to nonnegative self-adjoint
operators\\ on spaces of homogeneous type}
\author{Liang Song \ and \ Lixin Yan  }
\address{
   Liang Song,
    Department of Mathematics,
    Sun Yat-sen University,
    Guangzhou, 510275,
    P.R.~China}
\email{ songl@mail.sysu.edu.cn}
\address{
    Lixin Yan,
    Department of Mathematics,
    Sun Yat-sen University,
    Guangzhou, 510275,
    P.R.~China}
\email{
    mcsylx@mail.sysu.edu.cn}
\subjclass[2010]{Primary: 42B30; Secondary: 42B35, 47B38.}
\keywords{Hardy space, nonnegative self-adjoint operator,
atomic decomposition,  the nontangential and radial maximal functions, spaces of homogeneous type.}
\begin{document}

\begin{abstract}
Let $X$ be a metric measure space   with a  doubling measure  and
  $L$ be a nonnegative self-adjoint operator
  acting on $L^2(X)$. Assume that $L$ generates
an analytic   semigroup $e^{-tL}$ whose kernels $p_t(x,y)$ satisfy
Gaussian upper bounds but  without any assumptions on the regularity
of space variables $x$ and $y$.
In this article we   continue  a study in  \cite{SY} to
  give an atomic decomposition for the Hardy spaces
 $ H^p_{L,max}(X)$  in terms of the nontangential
 maximal function  associated with the heat semigroup
 of $L$,  and hence we   establish
 characterizations of    Hardy     spaces associated to an operator $L$,
 via  an atomic   decomposition or the nontangential maximal function.
   We also obtain  an equivalence of
    $ H^p_{L, max}(X)$ in terms of  the radial maximal function.
\end{abstract}

\maketitle


\section{Introduction}\label{sec:intro}
\setcounter{equation}{0}

Our goal in this paper is to   continue  a study in  \cite{SY} to establish the equivalence
of the maximal and atomic Hardy  spaces on spaces of homogeneous type, associated to nonnegative self-adjoint operators whose
heat kernel has Gaussian upper bounds. For the theory of Hardy spaces associated to operators,
it has attracted a lot of attention in the last   decades,
 and  has been a  very active research topic  in harmonic analysis -- see for example,
    \cite{ADM, AMR,  AR, CKS,  DKKP, DL, DY, DZ, HLMMY, HM, HMMc, JY,  SY, Y, YY}.

Let  $(X,d,\mu)$ be a metric measure space
endowed with a distance $d$
and a nonnegative Borel doubling
measure $\mu$ on $X$ (\cite{CW}).
Recall that a measure $\mu$ is doubling provided that there exists a
constant $C>0$ such that for all $x\in X$ and for all $r>0$,
\begin{eqnarray}
V(x,2r)\leq C V(x,r)
\label{e1.1}
\end{eqnarray}
where $V(x,r)=\mu(B(x,r))$, the volume of the open ball  $B =
B(x,r) := \{y \in X: d(y, x)<r\}$.
 Note that the doubling property  implies the following
strong homogeneity property,
\begin{equation}
V(x, \lambda r)\leq C\lambda^n V(x,r)
\label{e1.2}
\end{equation}

\noindent for some $C, n>0$ uniformly for all $\lambda\geq 1, r>0$ and $x\in X$.
In Euclidean space with Lebesgue measure, the parameter $n$ corresponds to
the dimension of the space, but in our more abstract setting, the optimal $n$
 need not even be an integer. There also
exists  $C>0$  so that

\begin{equation}
V(y,r)\leq C\Big( 1+\frac{d(x,y)}{r}\Big)^n V(x,r)
\label{e1.3}
\end{equation}

\noindent
uniformly for all $x,y\in X$ and $r>0$. Indeed, property (\ref{e1.3})  is a direct
consequence of the triangle inequality for the metric
$d$ and the strong homogeneity property (\ref{e1.2}).

The following will be assumed throughout the article unless otherwise specified:

\medskip

 \noindent
 {\bf   (H1)} \
  $L$ is a non-negative self-adjoint operator on $L^2({X})$;

\smallskip

 \noindent
 {\bf   (H2)} \ The kernel of $e^{-tL}$, denoted by $p_t(x,y)$,
 is a measurable function on $X\times X$ and  satisfies
a Gaussian upper bound, that is
$$ \leqno{\rm (GE)}\hspace{4cm}
 |p_t(x,y)|\leq C \frac{1}{V(x,\sqrt{t})} \exp\left(-{  {{d(x,y)}^2}\over  ct}\right)
$$
for all $t>0$,  and $x,y\in X,$   where $C$ and $c$   are positive
constants.

We now recall the notion of a
 $(p,q,M)$-atom associated to
an operator $L$ (\cite{AMR, DL, HLMMY}).

\begin{definition}\label{def1.2}
Given   $0<p\leq 1\leq q\leq \infty$, $p<q$  and   $M\in {\mathbb N}$,
a function $a\in L^2(X)$ is called a $(p,q,M)$-atom associated to
the operator $L$ if there exist a function $b\in {\mathcal D}(L^M)$
and a ball $B\subset X$ such that

\smallskip

{  (i)}\ $a=L^M b$;

\smallskip

{  (ii)}\ {\rm supp}\  $L^{k}b\subset B, \ k=0, 1, \dots, M$;

\smallskip

{  (iii)}\ $\|(r_B^2L)^{k}b\|_{L^q (X)}\leq r_B^{2M}
{V(B)}^{1/q-1/p},\ k=0,1,\dots,M$.
\end{definition}

\smallskip

The  atomic Hardy space $H^p_{L, {\rm at}, q, M}(X)
 $ is defined as follows.

\begin{definition} \label{def1.3}
 We will say that $f= \sum\lambda_j
a_j$ is an atomic $(p, q, M)$-representation (of $f$) if $
\{\lambda_j\}_{j=0}^{\infty}\in {\ell}^p$, each $a_j$ is a
$(p, q, M)$-atom, and the sum converges in $L^2(X).$  Set
\begin{eqnarray*}
\mathbb  H^p_{L, {\rm at}, q, M}(X):=\Big\{f:  f \mbox{  has
an atomic $(p, q, M)$-representation} \Big\},
\end{eqnarray*}
with the norm $\big\|f\big\|_{\mathbb  H^p_{L, {\rm at}, q, M}(X)}$ given by
\begin{eqnarray*}
\inf\Big\{\Big(\sum_{j=0}^{\infty}|\lambda_j|^p\Big)^{1/p}:
f=\sum\limits_{j=0}^{\infty}\lambda_ja_j \ \mbox{ is an atomic
$(p,q,M)$-representation}\Big\}.
\end{eqnarray*}
The space $H^p_{L, {\rm at}, q, M}(X)$ is then defined as
the completion of $\mathbb  H^p_{L, {\rm at}, q, M}(X)$ with
respect to this norm.
 \end{definition}


Given a function $f\in L^2({X})$, consider the following  non-tangential maximal
function associated to  the heat semigroup generated by the
 operator $L$,
 \begin{eqnarray}\label{e1.4}
 f^*_L(x)=:\sup\limits_{d(x,y)<t}|e^{-t^2L}f(y)|.
\end{eqnarray}
 We may define the spaces  $ H^p_{L,max}({X}), 0<p\leq 1 $
as the completion of $\{L^2({X}): \|f^*_L\|_{L^p({X})}<\infty\}$
with respect to $L^p$-norm of the non-tangential maximal function; i.e.,
\begin{eqnarray}\label{e1.5}
\big\|f\big\|_{H^p_{L, max}({X})}:=\big\|f^*_L\big\|_{L^p({X})}
\end{eqnarray}
It can be verified (see \cite{HLMMY, DL, JY}) that for all $q>p$ with   $1\leq q\leq \infty$ and every
number $M>\frac{n}{2}(\frac{1}{p}-1)$, any $(p,q,M)$-atom $a$ is in $ H^p_{L,max}({X})$
  and so the following
continuous inclusion  holds:
\begin{eqnarray}\label{e1.6}
H^p_{L, {\rm at}, q, M}({X})\subseteq  H^p_{L, max}({X}).
\end{eqnarray}
A natural question is to show the following continuous inclusion
 $ H^p_{L, max}({X})\subseteq H^p_{L, {\rm at}, q, M}({X})$.
In the case of  $X={\mathbb R^n}$, it is known that
   the inclusion $ H^p_{L, max}({\mathbb R^n})\subseteq
H^p_{L, {\rm at}, q, M}({\mathbb R^n})$ holds  for certain    operators including
 Schr\"odinger operators   with nonnegative potentials  and second order divergence form elliptic operators
 via particular PDE technique  (see for example, \cite{DZ, FS, HLMMY, HM}).
Very recently,   the authors of this article have made  a reformulation and modification of
 of a technique due to A. Calder\'on \cite{C} to
obtain an atomic decomposition directly from
 $ H^p_{L,max}(\R)$.
 Precisely,
under the assumptions   ${\bf   (H1)}$  and ${\bf   (H2)}$ of the operator $L$
 but  without any assumptions on the regularity of $p_t(x,y)$
of space variables $x$ and $y$,
we  have  that   $H^p_{L, max}({\mathbb R^n})\subseteq
H^p_{L, {\rm at}, q, M}({\mathbb R^n})$ for   $0<p\leq 1\leq q\leq \infty$ with $q>p$,
and    all integers $M> {n\over 2}({1\over p}-1)$,
and hence by \eqref{e1.6},
$$
 H^p_{L, {  max}}({\mathbb R^n}) \simeq     H^p_{L, {\rm at}, q, M}({\mathbb R^n}).
$$
We point out that in \cite{C},
a decomposition  of the function
  $ F(x,t)=f\ast \varphi_t(x)$ associated with the distribution $f$
 was given, and  convolution operation of the function $F$ played an important role in the proof.
 In \cite[Theorem 1.4]{SY},  no analogue of
convolution operation of the function  $ t^2L e^{-t^2L}f(x)$,
the proof
 depends    critically on the geometry of ${\mathbb R^n}$ to  use
  oblique cylinders  of ${\mathbb R}^{n+1}_+$:
   for every  cube $Q$ of $\mathbb{R}^n$ and for $\bar{e}=(1,\cdots,1)\in \R$
\begin{align*}
\tilde{Q}:=\{({y}, t)\in \mathbb{R}^{n+1}_+: {y}+3t\bar{e}\in Q\},
\end{align*}
 in place of vertical cylinders in
  Calder\'on's construction in \cite{C}.
  However, ``oblique cylinders"  do not exist on spaces of homogeneous type
  and hence
   it is not trivial to  generalize the method in \cite{SY} to the case of spaces of homogeneous type.
  So, we may ask the following question:

\medskip

\noindent
{\large \bf Question 1.} \ Is it possible to show an inclusion   $H^p_{L, max}(X)\subseteq
H^p_{L, {\rm at}, q, M}(X)$ on space of homogeneous type $X$?

 \medskip

Next we consider the Hardy space
 $ H^p_{L, max}(X)$  in terms of the radial maximal function.
 Given an operator    $L$ satisfying ${\bf (H1)}$-${\bf (H2)}$,
we may define the spaces $H^p_{L,\ rad}(X), 0<p\leq 1 $
as the completion of $\{f \in L^2(X): \|f^+_L\|_{L^p(X)}<
\infty\}$ with respect to $L^p$-norm of the radial  maximal function; i.e.,
\begin{eqnarray}\label{e1.7}
\big\|f\big\|_{H^p_{L,  rad}(X)}:=\big\|f^+_L\big\|_{L^p(X)}:=\left\|\sup\limits_{t>0}|e^{-t^2L}f|\right\|_{L^p(X)} .
\end{eqnarray}
Fix $0<p\leq 1$. For all $q>p$ with $1\leq q\leq \infty$ and  for all integers $M> {n\over 2}({1\over p}-1)$,
  the following
continuous inclusion  holds:
\begin{eqnarray}\label{e1.8}
H^p_{L, {\rm at}, q, M}({X})\subseteq  H^p_{L, max}({X})\subseteq  H^p_{L,  rad}(X)
\end{eqnarray}
  by  \eqref{e1.4}-\eqref{e1.7}.
In the case of  $X={\mathbb R^n}$, it is known that  the inclusion $ H^p_{L, rad}({\mathbb R^n})\subseteq
H^p_{L, max}({\mathbb R^n})$   holds  for certain    operators including
 Schr\"odinger operators   with nonnegative potentials and second order divergence form elliptic operators
 via particular PDE technique  (see example, \cite{DZ, FS, HLMMY}).
 We may ask the following question:

\medskip

\noindent
{\large \bf Question 2.} \  Is it possible to show an inclusion   $H^p_{L, rad}(X)\subseteq
H^p_{L, max}(X)$  assuming merely  that an operator $L$ satisfies  ${\bf (H1)}$-${\bf (H2)}$?

 \medskip

 The aim of this article  is  give an affirmative answer to Questions 1 and 2 above to
establish  the equivalent
  characterizations
 of Hardy     space
associated to an operator $L$ satisfies ${\bf   (H1)}$  and ${\bf   (H2)}$ on spaces of homogeneous type $X$,
 including an atomic   decomposition,
   the nontangential maximal functions and the radial maximal functions.
Throughout  the article, we always assume that $\mu(X)=\infty$ and $\mu(\{x\})=0$ for all $x\in X$.

\begin{theorem}\label{th1.1}
Let $(X,d,\mu)$ be as in \eqref{e1.1} and \eqref{e1.2}.
Suppose  that an operator $L$ satisfies ${\bf   (H1)}$  and ${\bf   (H2)}$.
Fix $0<p\leq 1$. For all $q>p$ with $1\leq q\leq \infty$ and  for all integers $M> {n\over 2}({1\over p}-1)$,
we  have that
 \begin{itemize}
\item[(i)] $H^p_{L, rad}({X})\subseteq H^p_{L, max}({X});$

\item[(ii)]  $ H^p_{L,max}({X})\subseteq
H^p_{L, {\rm at}, q, M}({X})$.
\end{itemize}
Hence by \eqref{e1.8},
$$
 H^p_{L, {\rm at}, q, M}({X})  \simeq  H^p_{L, { max}}({X}) \simeq  H^p_{L, {  rad}}({X}).
$$
\end{theorem}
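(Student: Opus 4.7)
For part (i), I would establish the pointwise bound
\begin{equation*}
f^*_L(x) \,\leq\, C\bigl[\mathcal{M}\bigl((f^+_L)^r\bigr)(x)\bigr]^{1/r}, \qquad x\in X,
\end{equation*}
for any fixed $r\in (0,p)$, where $\mathcal{M}$ is the Hardy--Littlewood maximal function on $X$. Given $(y,t)$ with $d(x,y)<t$, splitting the semigroup as $e^{-t^2L}=e^{-t^2L/2}\circ e^{-t^2L/2}$ and using the trivial pointwise estimate $|e^{-t^2L/2}f(z)|\leq f^+_L(z)$ reduces the matter to controlling $\int p_{t^2/2}(y,z)f^+_L(z)\,d\mu(z)$; the Gaussian bound (GE), the doubling property \eqref{e1.2}, and the inequality $d(x,y)<t$ then yield the desired estimate after dyadic annular decomposition around $y$. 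Part (i) follows from the $L^p$-boundedness of $g\mapsto [\mathcal{M}(|g|^r)]^{1/r}$ for $r<p$.

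For part (ii), I would start from the $L^2$-convergent Calder\'on reproducing formula
\begin{equation*}
f \,=\, c_M \int_0^\infty (t^2 L)^{M+1} e^{-t^2L} f \,\frac{dt}{t},
\end{equation*}
and decompose the half-space $X\times(0,\infty)$ into ``Whitney regions'' $\{R^k_j\}$ indexed by the balls $\{B^k_j\}$ of a Whitney-type covering of each level set $O_k:=\{x\in X:f^*_L(x)>2^k\}$. Each $R^k_j$ is to be localized spatially to a dilate of $B^k_j$ and in scale to $t\sim r_{B^k_j}$, and globally the family $\{R^k_j\}$ should partition $X\times(0,\infty)$ up to a null set. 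Setting
\begin{equation*}
a^k_j \,:=\, \frac{c_M}{\lambda^k_j}\int_{R^k_j}(t^2L)^{M+1}e^{-t^2L}f\,\frac{dt}{t}, \qquad \lambda^k_j\,\simeq\,2^k V(B^k_j)^{1/p},
\end{equation*}
gives $f=\sum_{k,j}\lambda^k_j a^k_j$ in $L^2(X)$. Factoring out $L^M$ writes $a^k_j=L^M b^k_j$, and the atomic conditions (ii)--(iii) of Definition \ref{def1.2} reduce to: (a) support containment $\mathrm{supp}\,L^\ell b^k_j\subset CB^k_j$ for $\ell=0,\ldots,M$, read off from the spatial projection of $R^k_j$; and (b) size estimates $\|(r_{B^k_j}^2L)^\ell b^k_j\|_{L^2}\lesssim r_{B^k_j}^{2M}V(B^k_j)^{1/2-1/p}$, obtained by duality with $L^2$ test functions supported in $CB^k_j$, combining Cauchy--Schwarz in $(y,t)$ with the $L^2$-spectral bound $\|(t^2L)^{M+1-\ell}e^{-t^2L}\|_{L^2\to L^2}\lesssim 1$ and the a priori bound $|e^{-t^2L}f(y)|\leq 2^{k+1}$ that holds throughout $R^k_j$ once the latter is arranged to lie outside the ``tent'' over $O_{k+1}$. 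Summing $|\lambda^k_j|^p\simeq 2^{kp}V(B^k_j)$ then yields $\lesssim\|f^*_L\|^p_{L^p(X)}$ by the bounded-overlap property of the Whitney covering together with the layer-cake identity.

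The main obstacle is the construction of the family $\{R^k_j\}$. In \cite{SY} this was accomplished by the oblique cylinders $\tilde Q:=\{(y,t):y+3t\bar e\in Q\}$ attached to Euclidean cubes, which used the translation structure of $\mathbb{R}^n$ to enforce spatial localization, disjointness, and compatibility with $\{O_k\}$ simultaneously. On a general space of homogeneous type $X$ the shift by $3t\bar e$ has no meaning, and the substitute must be combinatorial. I expect to use the Christ or Hyt\"onen--Kairema dyadic cube systems on $X$: at each height $t$ the slice of $R^k_j$ would be chosen among the dyadic cubes of side length $\sim t$ near $B^k_j$, with the ``shifted'' neighbour selected through a stopping-time rule adapted to $\{O_k\}$. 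Verifying that this family satisfies the three geometric requirements---disjointness, spatial localization to $CB^k_j$, and the implication $(y,t)\in R^k_j\Rightarrow B(y,ct)\not\subset O_{k+1}$---will be the core of the argument; once it is in hand, the atomic and coefficient estimates are executed using only $L^2$-spectral theory for $L$, so that no regularity of $p_t(x,y)$ in the space variables is required.
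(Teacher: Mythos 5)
Your plan for part (i) has a real gap at the very first step. Bounding the inner factor by its full supremum, $|e^{-t^2L/2}f(z)|\leq f^+_L(z)$, and then integrating the Gaussian kernel over dyadic annuli only yields $f^*_L(x)\leq C\,\mathcal{M}(f^+_L)(x)$. This is strictly weaker than the inequality you claim: for $0<r<1$ the power--mean inequality gives $[\mathcal{M}(g^r)]^{1/r}\leq \mathcal{M}(g)$ on every ball, so $f^*_L\lesssim [\mathcal{M}((f^+_L)^r)]^{1/r}$ does not follow from $f^*_L\lesssim\mathcal{M}(f^+_L)$; and the weaker bound is useless for $p\leq 1$, since $\mathcal{M}$ is not bounded on $L^p$. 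To bring an exponent $r<1$ inside you must keep only the $r$-th power of $|e^{-t^2L/2}f(z)|$ and dominate the remaining $(1-r)$-th power by a weighted grand maximal function evaluated at $x$, and then absorb that factor, which in turn requires an a priori finiteness of the grand maximal function for $f\in L^2$. This self-improvement is exactly what the paper's proof of Theorem \ref{th3.1} supplies: the function $M^{**}_{L,\varphi,N}$, the splitting \eqref{e3.6} coming from a Calder\'on reproducing formula, the estimates \eqref{e3.7}--\eqref{e3.8}, and the absorption passing from \eqref{e3.9} to \eqref{e3.3}. As written, your pointwise estimate is unproved.

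For part (ii) there are two concrete obstructions. First, atoms manufactured from $(t^2L)^{M+1}e^{-t^2L}$ are never compactly supported: the heat kernel has Gaussian decay but not compact support, so no choice of Whitney regions $R^k_j$ can give $\mathrm{supp}\,L^\ell b^k_j\subset CB^k_j$, and condition (ii) of Definition \ref{def1.2} fails. The paper avoids this by finite propagation speed: the outer operator is $\Psi(t\sqrt L)=(t^2L)^M\Phi(t\sqrt L)$ with $\Phi$ the Fourier transform of an even $\varphi\in C_0^\infty(-1,1)$, so that $K_{\Phi(t\sqrt L)}$ is supported in $\{d(x,y)\leq t\}$ by \eqref{e2.22}; you never invoke this ingredient. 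Second, and more seriously, your size estimate collapses in the top-order case $\ell=M$. Your regions cannot simultaneously satisfy $t\sim r_{B^k_j}$ and partition $X\times(0,\infty)$ compatibly with the level sets: the portion of $\widehat{O_k}\setminus\widehat{O_{k+1}}$ sitting over a Whitney ball contains all scales $0<t\lesssim r_{B^k_j}$ and has nowhere else to go. Once the $t$-range is $(0,c\,r_{B^k_j})$, the estimate you propose (Minkowski/Cauchy--Schwarz, the uniform $L^2$ spectral bound, and the pointwise bound $2^{k+1}$) produces $\int_0^{c r_B} \frac{dt}{t}=\infty$ when $\ell=M$, because no factor $t^{2(M-\ell)}$ survives. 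This logarithmic divergence is the central difficulty of the theorem, and the paper overcomes it not by size estimates but by cancellation: on suitable $t$-intervals the integral is computed exactly via \eqref{e4.2} as $\eta(a\sqrt L)f-\eta(b\sqrt L)f$, which is controlled by $\mathscr{M}_Lf\leq 2^{i+1}$, and the geometric core is Lemma \ref{le4.2}, which shows that for fixed $x$ the cutoff $\chi_{T_i^k}(y,t)$, tested against the finitely propagating kernel and the cone-type sets $\mathcal{R}(E)$ of \eqref{definition of cone}, is constant in $t$ outside at most four transition intervals of bounded ratio, where the crude kernel bound gives \eqref{e4.8}. A stopping-time selection of dyadic cubes can at best deliver disjointness and localization; it does not supply this cancellation mechanism, and no amount of $L^2$ duality replaces it. Two smaller points: the a priori bound on your regions must control $t^2Le^{-t^2L}f$ and $\eta(t\sqrt L)f$, not just $e^{-t^2L}f$ (the paper arranges this through $\mathscr{M}_L$ and Proposition \ref{prop2.3}), and producing only $(p,2,M)$-atoms would not by itself yield the stated inclusion for every $q\leq\infty$, whereas the paper's $L^\infty$ estimates produce $(p,\infty,M)$-atoms, which are $(p,q,M)$-atoms for all such $q$.
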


	The layout of the article is as follows. In Section 2 we recall some basic properties of
heat kernels and finite propagation speed for the wave equation, and build the necessary kernel estimates
for functions of an operator, which is useful in
 the proof of   Theorem~\ref{th1.1}. In Section 3
 we show  (i) of Theorem~\ref{th1.1} to obtain an equivalence of Hardy spaces on spaces of homogeneous type,  in terms of
   the nontangential and  radial maximal functions.
   In Section 4 we will show  our main result, (ii) of Theorem~\ref{th1.1}.
A crucial idea in the proof is to
 make  a   modification of \cite{C, SY}
 to   decompose  $X\times (0, \infty)$  into   type of ``cones"
 in place of ``vertical cylinders" of $\mathbb{R}^{n+1}_+$  in \cite{C} or
  ``oblique cylinders" of $\mathbb{R}^{n+1}_+$ in \cite{SY} (see   \eqref{eoo} below).
  This leads us to obtain
 characterizations of    Hardy     spaces associated to an operator $L$ on spaces of homogeneous type,
 via  an atomic   decomposition or the nontangential maximal function.

Throughout, the letter ``$c$"  and ``$C$" will denote (possibly
different) constants  that are independent of the essential
variables.

\medskip

\bigskip

\section{Preliminaries}
\setcounter{equation}{0}

Recall that, if  $L$ is a nonnegative,
self-adjoint operator on $L^2({X})$, and    $E_L(\lambda)$ denotes a spectral
decomposition associated with $L$,   then for every bounded Borel function
$F:[0,\infty)\to{\Bbb C}$, one defines the operator
$F(L): L^2({X})\to L^2({X})$ by the formula
\begin{align}\label{e2.1}
F(L):=\int_0^{\infty}F(\lambda) \, {\rm d}E_L(\lambda).
\end{align}
In particular, the  operator $ \cos(t\sqrt{L})$  is then well-defined  on
$L^2({X})$. Moreover, it follows from Theorem 3.4 of
\cite{CS}
  that
 the integral kernel
 $K_{\cos(t\sqrt{L})}$ of $\cos(t\sqrt{L})$ satisfies
\begin{align}
 {\rm supp}K_{\cos(t\sqrt{L})}\subseteq
 \bigl\{(x,y)\in {X}\times {X}: d(x,y)\leq   t\bigr\}.
 \label{e2.2}
 \end{align}
  By the Fourier inversion formula, whenever $F$ is
an even bounded Borel function with the Fourier transform of $F$,
$\hat{F} \in L^1(\mathbb{R})$, we can  write $F(\sqrt{L})$ in terms
of $\cos(t\sqrt{L})$. Concretely, by recalling (\ref{e2.1}) we have
$$
F(\sqrt{L})=(2\pi)^{-1}\int_{-\infty}^{\infty}{\hat
F}(t)\cos(t\sqrt{L}) \, {\rm d}t,
$$
which, when combined with (\ref{e2.2}), gives
\begin{align}\label{e2.3}
K_{F(\sqrt{L})}(x,y)=(2\pi)^{-1}\int_{|t|\geq  d(x,y)}{\hat F}(t)
K_{\cos(t\sqrt{L})} (x,y) \, {\rm d}t.
\end{align}
This property leads us the following result (see \cite[Lemma 3.5]{HLMMY}):
 For every  even function $\varphi\in C_0^{\infty}(\mathbb R)$
with supp $\varphi\subset (-1, 1)$,
\begin{align}
 {\rm supp}K_{\Phi(t\sqrt{L})}\subseteq
 \bigl\{(x,y)\in {X}\times {X}: d(x,y)\leq   t\bigr\},
 \label{e2.22}
 \end{align}
 where $\Phi$ denotes the Fourier transform of $\varphi$.

\begin{lemma}\label{le2.1} Assume that an operator $L$ satisfies  ${\bf (H1)}$-${\bf (H2)}$.
\begin{itemize}

\item[(i)] Let $\varphi \in {\mathscr S}(\mathbb R) $ be an even function.
   Then for every $\beta>0$, there exists a positive constant $C=C({n, \beta, \varphi})$
such that the kernel $K_{\varphi(t\sqrt{L})}(x,y)$ of $ \varphi(t\sqrt{L})$
satisfies
\begin{eqnarray}\label{e2.4}
\big|K_{\varphi(t\sqrt{L}) }(x,y)\big|
\leq  C\,   \frac{1}{\max(V(x,t), V(y,t))}
 \left(1+ \frac{d(x,y)}{t}\right)^{-n-\beta}
\end{eqnarray}
for all $t>0$ and $x, y\in {X}$.

\item[(ii)]
Let $\psi_i \in {\mathscr S}(\mathbb R) $ be even functions, $\psi_i(0)=0, i=1,2$.
   Then for every $\beta>0$, there exists a positive constant $C=C({n, \beta, \psi_1, \psi_2})$
such that the kernel $K_{\psi_1(s\sqrt{L}) \psi_2(t\sqrt{L}) }(x,y)$ of $ \psi_1(s\sqrt{L}) \psi_2(t\sqrt{L})$
satisfies
\begin{eqnarray}\label{e2.5}\\
\hspace{0.5cm}\big|K_{\psi_1(s\sqrt{L}) \psi_2(t\sqrt{L}) }(x,y)\big|
\leq  C\, \min\Big(\frac{s}{t}, \frac{t}{s}\Big)\,   \frac{1}{\max\left(V(x,\max{(s,t)}), V(y,\max{(s,t)}\right)}
 \left(1+ \frac{d(x,y)}{\max{(s,t)}}\right)^{-n-\beta}\nonumber
\end{eqnarray}
for all $t>0$ and $x, y\in {X}$.
\end{itemize}
\end{lemma}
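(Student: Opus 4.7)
My strategy is to derive both parts from the combination of Fourier inversion, the finite propagation speed \eqref{e2.2}, and the Gaussian heat kernel bound (GE).

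For (i), starting from
\begin{align*}
\varphi(t\sqrt L)=\frac{1}{2\pi}\int_{\mathbb R}\hat\varphi(\tau)\cos(\tau t\sqrt L)\,d\tau,
\end{align*}
the support property \eqref{e2.2} forces $K_{\cos(\tau t\sqrt L)}(x,y)=0$ when $|\tau|t<d(x,y)$, already encoding the spatial decay. To recover the correct volume prefactor I would decompose $\varphi=\sum_j\varphi_j$ dyadically in frequency, with each $\widehat{\varphi_j}$ supported in $\{|\tau|\sim 2^j\}$; the Schwartz decay of $\hat\varphi$ makes all seminorms of $\varphi_j$ decay faster than any polynomial in $j$. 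Finite propagation then restricts $K_{\varphi_j(t\sqrt L)}$ to $\{d(x,y)\le 2^{j+1}t\}$. For each $j$, a standard argument combining this support constraint, the $L^2$-boundedness of $\varphi_j(t\sqrt L)$, and the on-diagonal heat kernel bound $|K_{e^{-t^2L}}(x,x)|\le C/V(x,t)$ from (GE) (via a Cauchy--Schwarz / $TT^{*}$ step) yields $|K_{\varphi_j(t\sqrt L)}(x,y)|\le C_j/V(x,t)$ on $\{d(x,y)\le 2^{j+1}t\}$, with $C_j$ decaying faster than any polynomial. Summing in $j$ and symmetrizing via \eqref{e1.3} produces \eqref{e2.4}.

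For (ii), I would use the vanishing $\psi_i(0)=0$ together with evenness to factor $\psi_i(\lambda)=\lambda^2\eta_i(\lambda)$ with $\eta_i$ even and Schwartz. Assuming without loss of generality $s\le t$, commutativity of the functional calculus of $L$ gives
\begin{align*}
\psi_1(s\sqrt L)\psi_2(t\sqrt L)=\Bigl(\frac{s}{t}\Bigr)^{\!2}\eta_1(s\sqrt L)\,\xi_2(t\sqrt L),\qquad \xi_2(\lambda):=\lambda^2\psi_2(\lambda),
\end{align*}
where $\xi_2$ is also even Schwartz. Part (i) applies to $\eta_1(s\sqrt L)$ at scale $s$ and to $\xi_2(t\sqrt L)$ at scale $t$, and then the composition identity
\begin{align*}
K_{\psi_1(s\sqrt L)\psi_2(t\sqrt L)}(x,y)=\Bigl(\frac{s}{t}\Bigr)^{\!2}\int_X K_{\eta_1(s\sqrt L)}(x,z)\,K_{\xi_2(t\sqrt L)}(z,y)\,d\mu(z)
\end{align*}
reduces \eqref{e2.5} to a standard doubling-type calculation bounding the convolution of two kernels of type \eqref{e2.4} at scales $s\le t$ by a single kernel of the same type at the larger scale $t$. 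Since $(s/t)^2\le s/t=\min(s/t,t/s)$ when $s\le t$, this in fact delivers a slightly stronger bound than claimed; the case $s>t$ is symmetric.

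The main obstacle will be (i): justifying rigorously the on-diagonal estimate with the proper $1/V(x,t)$ volume factor together with polynomial decay $(1+d(x,y)/t)^{-n-\beta}$ of arbitrary order $\beta$, through the interplay between (GE), finite propagation, and the Fourier-side dyadic decomposition. The composition step in (ii) is somewhat technical but is a routine consequence of the doubling property \eqref{e1.2}--\eqref{e1.3}, handled by splitting the integration domain according to whether $d(x,z)$ or $d(z,y)$ dominates $d(x,y)$.
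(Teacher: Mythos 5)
Your outer architecture for (i) (Fourier-side dyadic decomposition, finite propagation speed \eqref{e2.2} to localize each piece in $\{d(x,y)\le 2^{j+1}t\}$, rapid decay of the constants in $j$, summation and symmetrization via \eqref{e1.3}) is the right family of ideas, and your reduction in (ii) — factoring $\psi_1(s\lambda)\psi_2(t\lambda)=(s/t)^2\eta_1(s\lambda)\xi_2(t\lambda)$ for $s\le t$ and composing two kernels of type \eqref{e2.4} — is correct and is essentially the standard route. (For the record, the paper gives no argument of its own here: it simply cites \cite[Lemma 2.3]{CDa} and \cite[Lemma 2.3]{SY}.) But the pivotal step of (i) is exactly where your sketch has a genuine gap. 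You claim that ``$L^2$-boundedness of $\varphi_j(t\sqrt L)$, the on-diagonal bound $|p_{t^2}(x,x)|\le C/V(x,t)$, and a Cauchy--Schwarz/$TT^*$ step'' yield the pointwise bound $|K_{\varphi_j(t\sqrt L)}(x,y)|\le C_j/V(x,t)$ for each band-limited piece. Those ingredients do not give pointwise control of $K_{\varphi_j(t\sqrt L)}$ itself: Cauchy--Schwarz with the heat kernel controls only mollified objects, e.g. $\|K_{T e^{-t^2L}}(x,\cdot)\|_{L^2}\le \|T\|_{2\to 2}\,\|p_{t^2}(x,\cdot)\|_{L^2}\lesssim \|T\|_{2\to 2}V(x,t)^{-1/2}$, or kernels of compositions like $e^{-t^2L}\,T\,e^{-t^2L}$. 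To strip off the heat factor you would need the multiplier $\varphi_j(v)e^{\varepsilon v^2}$ to be bounded for some $\varepsilon>0$, and this is impossible for your pieces: $\widehat{\varphi_j}$ has compact support, so $\varphi_j$ is entire of exponential type and cannot decay at a Gaussian rate. Moreover, inserting heat mollifiers destroys precisely the compact-support property of the kernel that your argument uses to manufacture the factor $(1+d(x,y)/t)^{-n-\beta}$, and summing mollified pieces reconstructs $e^{-\varepsilon t^2L}\varphi(t\sqrt L)e^{-\varepsilon t^2L}$, not $\varphi(t\sqrt L)$. Even the existence of a pointwise-bounded kernel for each $\varphi_j(t\sqrt L)$ is not free under {\bf (H1)}--{\bf (H2)} alone.

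The standard repair — and this is what the cited proofs actually do — is to work first with $L^2$ bounds on kernel rows rather than pointwise bounds: for spectrally localized multipliers $F$ supported in $\{\lambda\sim 2^k\}$ one has $\|K_{F(t\sqrt L)}(x,\cdot)\|_{L^2}\le \|F(v)e^{(2^{-k}v)^2}\|_\infty\,\|p_{(2^{-k}t)^2}(x,\cdot)\|_{L^2}\lesssim \|F\|_\infty\,V(x,2^{-k}t)^{-1/2}$, and an extra spectral dyadic decomposition of each band-limited piece $\varphi_j$ then gives $\|K_{\varphi_j(t\sqrt L)}(x,\cdot)\|_{L^2}\lesssim C_j V(x,t)^{-1/2}$ with $C_j$ rapidly decreasing, while finite propagation still pins down the support of these rows. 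Pointwise bounds are then obtained only for compositions of two such operators, via $|K_{AB}(x,y)|\le \|K_A(x,\cdot)\|_{L^2}\|K_{B^*}(y,\cdot)\|_{L^2}$ — which is why your part (ii) is actually the easier half (its product structure fits this scheme directly, and your $(s/t)^2\le\min(s/t,t/s)$ bookkeeping is fine) — whereas for (i) one needs an additional device (e.g. writing $\varphi$ as a product of two even rapidly decaying functions, or an equivalent duality/composition trick) before a pointwise estimate of $K_{\varphi(t\sqrt L)}$ can be asserted. As written, your proof of (i) assumes away this central difficulty, and since (ii) is built on (i), the gap propagates.
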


\begin{proof} The proof  of (i) and (ii) is similar to that of \cite[Lemma 2.3]{CDa}
and \cite[Lemma 2.3]{SY} on the Euclidean spaces ${\mathbb R}^n$, respectively. We omit the detail
here.
 \end{proof}

\medskip

Let  $F(y,t)$ be a $\mu$-measurable function of $X\times (0,+\infty)$.
For $\alpha>0,$ set $F^{\ast}_{\alpha}(x)=\sup\limits_{d(x,y)<\alpha t}|F(y,t)|$.
 With the notation above, we have the following result.
\begin{lemma}\label{le2.2}
For any $p>0$  and $0<\alpha_2\leq \alpha_1$,
$$
\left\|F^{\ast}_{\alpha_1}\right\|_{L^p({X})}\leq
C\left(1+{2\alpha_1\over\alpha_2}\right)^{n/p}\left\|F^{\ast}_{\alpha_2}\right\|_{L^p({X})},
$$
where $C=C(p,n)$ is independent of $\alpha_1,\alpha_2 $ and $F$.
\end{lemma}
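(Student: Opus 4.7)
The plan is to carry out the classical distributional comparison argument, adapted to the space of homogeneous type $(X,d,\mu)$, reducing the inequality to the weak-type $(1,1)$ bound for the Hardy--Littlewood maximal function. Concretely, one works with the level sets
$$E_1(\lambda):=\{x\in X: F^{\ast}_{\alpha_1}(x)>\lambda\},\qquad E_2(\lambda):=\{x\in X: F^{\ast}_{\alpha_2}(x)>\lambda\},$$
and aims to establish the distributional estimate
$$\mu\bigl(E_1(\lambda)\bigr)\leq C\bigl(1+2\alpha_1/\alpha_2\bigr)^{n}\,\mu\bigl(E_2(\lambda)\bigr)\qquad\text{for all }\lambda>0.$$
Once this is proved, the assertion of the lemma follows from the layer-cake formula: integrating $\mu(E_1(\lambda))\leq C(1+2\alpha_1/\alpha_2)^n\mu(E_2(\lambda))$ against $p\lambda^{p-1}\,d\lambda$ and taking $p$-th roots yields exactly the claimed bound with constant $C(1+2\alpha_1/\alpha_2)^{n/p}$.

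For the distributional bound, fix $x\in E_1(\lambda)$ and select $(y,t)\in X\times(0,\infty)$ with $d(x,y)<\alpha_1 t$ and $|F(y,t)|>\lambda$. The key geometric observation is that for every $z\in B(y,\alpha_2 t)$, one has $d(z,y)<\alpha_2 t$ and the \emph{same} $(y,t)$ witnesses $F^{\ast}_{\alpha_2}(z)>\lambda$; thus $B(y,\alpha_2 t)\subset E_2(\lambda)$. Moreover, because $d(x,y)<\alpha_1 t$, the inclusion $B(y,\alpha_2 t)\subset B(x,(\alpha_1+\alpha_2)t)$ holds, and by applying the strong homogeneity \eqref{e1.2} together with \eqref{e1.3} one gets
$$V(x,(\alpha_1+\alpha_2)t)\leq C\bigl(1+\alpha_1/\alpha_2\bigr)^{n}V(y,\alpha_2 t).$$
Hence the averaged density
$$\frac{\mu\bigl(B(x,(\alpha_1+\alpha_2)t)\cap E_2(\lambda)\bigr)}{V(x,(\alpha_1+\alpha_2)t)}\geq\frac{V(y,\alpha_2 t)}{V(x,(\alpha_1+\alpha_2)t)}\geq c\bigl(1+\alpha_1/\alpha_2\bigr)^{-n}.$$
Taking the supremum over the balls centred at $x$, this implies the pointwise control
$$\mathcal{M}_{HL}(\chi_{E_2(\lambda)})(x)\geq c\bigl(1+\alpha_1/\alpha_2\bigr)^{-n},$$
where $\mathcal{M}_{HL}$ denotes the uncentred Hardy--Littlewood maximal operator on $(X,d,\mu)$.

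Since $x\in E_1(\lambda)$ was arbitrary, this yields
$$E_1(\lambda)\subset\bigl\{x\in X: \mathcal{M}_{HL}(\chi_{E_2(\lambda)})(x)\geq c(1+\alpha_1/\alpha_2)^{-n}\bigr\},$$
and the weak-type $(1,1)$ boundedness of $\mathcal{M}_{HL}$ on doubling spaces gives
$$\mu\bigl(E_1(\lambda)\bigr)\leq C(1+\alpha_1/\alpha_2)^{n}\,\mu\bigl(E_2(\lambda)\bigr).$$
Combining with the layer-cake step above completes the proof.

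There is no serious obstacle here, since this is a standard Fefferman--Stein comparison; the only step that requires care is the volume comparison between $V(x,(\alpha_1+\alpha_2)t)$ and $V(y,\alpha_2 t)$, where one must use \eqref{e1.2}--\eqref{e1.3} in the correct direction to bring out the factor $(1+\alpha_1/\alpha_2)^n$ (and hence $(1+2\alpha_1/\alpha_2)^n$ after absorbing constants) uniformly in $t$. The constant $C$ depends only on $p$ and the doubling constant, and thus only on $p$ and $n$ as required.
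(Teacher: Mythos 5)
Your proposal is correct and follows essentially the same route as the paper: the same distributional comparison in which a witnessing point $(y,t)$ forces $B(y,\alpha_2 t)\subset\{F^{\ast}_{\alpha_2}>\lambda\}$, the same volume comparison via \eqref{e1.2}--\eqref{e1.3} to lower-bound $\mathcal{M}(\chi_{E_2(\lambda)})$ on $\{F^{\ast}_{\alpha_1}>\lambda\}$, and the weak $(1,1)$ bound for the Hardy--Littlewood maximal operator combined with the layer-cake formula. The only cosmetic difference is that your constant appears as $(1+\alpha_1/\alpha_2)^n$ before absorbing the factor from \eqref{e1.3}, which is harmless since $\alpha_2\leq\alpha_1$.
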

\begin{proof}
   The proof of Lemma~\ref{le2.2} is standard (see for instance,
  \cite[Theorem 2.3]{CT} for the case of $X=\mathbb{R}^n$).
We write
$$
\left\|F^{\ast}_{\alpha_1}\right\|^p_{L^p({X})}=p\int_0^\infty \lambda^{p-1}
\mu\{x\in X: F^*_{\alpha_1}(x)>\lambda\} \ d\lambda.
$$
Observe that
\begin{align}\label{e2.6}
\left\{x\in X: F^*_{\alpha_1}(x)>\lambda\right\}\subset \left\{x\in X:
\mathcal{M}(\chi_E)(x)>C^{-2}\big(1+\frac{2\alpha_1}{\alpha_2}\big)^{-n}\right\},
\end{align}
where $E:= \{x\in X: F^*_{\alpha_2}(x)>\lambda\}$ and $\mathcal{M}$ denotes the Hardy--Littlewood maximal function.
Indeed, if $F^*_{\alpha_1}(x_0)>\lambda$,  then there exist  $y_0\in X$ and $t_0>0$
such that  $d(x_0,y_0)<\alpha_1t_0$ and $|F(y_0,t_0)|>\lambda$. Hence,
$B(y_0, \alpha_2t_0)\subset E$. It follows that
$$
\frac{\mu(B(x_0, (\alpha_1+\alpha_2)t_0)\cap E)}{V(x_0, (\alpha_1+\alpha_2)t_0)}
\geq \frac{V(y_0, \alpha_2t_0)}{V(x_0, (\alpha_1+\alpha_2)t_0)}.
$$
By (\ref{e1.2}) and (\ref{e1.3}),
\begin{eqnarray*}
 V(x_0, (\alpha_1+\alpha_2)t_0)&\leq& CV(y_0, (\alpha_1+\alpha_2)t_0)
\left(1+\frac{d(x_0,y_0)}{(\alpha_1+\alpha_2)t_0}\right)^n\\
 &\leq& C^2V(y_0, \alpha_2t_0)\left(\frac{\alpha_1+\alpha_2}{\alpha_2}
 \right)^n \left(1+\frac{\alpha_1}{\alpha_1+\alpha_2}\right)^n\\
 &\leq&
 C^2V(y_0, \alpha_2t_0)\left(1+\frac{2\alpha_1}{\alpha_2}\right)^n,
\end{eqnarray*}
which gives
$$
\frac{\mu(B(x_0, (\alpha_1+\alpha_2)t_0)\cap E)}{V(x_0, (\alpha_1+\alpha_2)t_0)}
\geq C^{-2}\left(1+\frac{2\alpha_1}{\alpha_2}\right)^{-n}.
$$
This proves (\ref{e2.6}).   By the weak (1,1) boundedness of Hardy--Littlewood
maximal function, we obtain the proof of Lemma~\ref{le2.2}.
\end{proof}

\begin{prop}\label{prop2.3}
Let $0<p\leq 1$.  Suppose  that an operator $L$ satisfies ${\bf   (H1)}$  and ${\bf   (H2)}$.
Let $\varphi_i \in {\mathscr S}(\mathbb R)$ be even functions with $ \varphi_i(0)=1$
and $\alpha_i>0, i=1,2$.
Then there exists a constant $C=C(n, \varphi_1, \varphi_2, \alpha_1, \alpha_2)$ such that
for every $f\in L^2({X})$, the functions
 $\varphi^{\ast}_{i, L, \alpha}f=\sup\limits_{d(x,y)<\alpha t}|\varphi_i(t\sqrt{L})f(y)|, i=1,2$,
 satisfy
\begin{eqnarray}\label{e2.7}
\left\|\varphi^{\ast}_{1, L, \alpha_1}f\right\|_{L^p({X})}\leq
C\left\|\varphi^{\ast}_{2, L, \alpha_2}f\right\|_{L^p({X})}.
\end{eqnarray}
As a consequence, for any even function $\varphi \in {\mathscr S}(\mathbb R)$  with $\varphi(0)=1$ and $\alpha>0$,
$$
 C^{-1}\|f^{\ast}_L\|_{L^p({X})}\leq  \left\|\varphi^{\ast}_{L, \alpha}f\right\|_{L^p({X})}\leq
 C\|f^{\ast}_L\|_{L^p({X})}.
 $$
\end{prop}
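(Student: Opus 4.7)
My plan is as follows. First I would use Lemma~\ref{le2.2} to reduce \eqref{e2.7} to the case of a single common aperture $\alpha_1=\alpha_2=\alpha$; variable apertures can then be restored at the cost of a constant. The concluding equivalence $\|\varphi^{\ast}_{L,\alpha}f\|_{L^p(X)}\simeq \|f^{\ast}_L\|_{L^p(X)}$ will follow by specializing $\varphi_2(\lambda)=e^{-\lambda^2}$ and $\alpha_2=1$ on one side and interchanging the roles of $\varphi_1$ and $\varphi_2$ on the other.

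The central device will be the splitting
\[\varphi_1(\lambda)=\varphi_2(\lambda)+\bigl(\varphi_1(\lambda)-\varphi_2(\lambda)\bigr)=:\varphi_2(\lambda)+\tilde\varphi(\lambda),\]
where $\tilde\varphi\in \mathscr{S}(\mathbb{R})$ is even and satisfies $\tilde\varphi(0)=0$. The $\varphi_2$-piece is handled trivially: whenever $d(x,y)<\alpha s$, $|\varphi_2(s\sqrt{L})f(y)|\leq \varphi^{\ast}_{2,L,\alpha}f(x)$. For the $\tilde\varphi$-piece I would pick an even $\Psi\in \mathscr{S}(\mathbb{R})$ with $\Psi(0)=0$ and $\int_0^\infty \Psi(u)\varphi_2(u)\,\tfrac{du}{u}\neq 0$; spectral calculus (together with the standing assumption $\mu(X)=\infty$ ensuring the harmlessness of the spectral mass at $0$) then provides the Calder\'on reproducing formula
\[I=c\int_0^\infty \Psi(t\sqrt{L})\,\varphi_2(t\sqrt{L})\,\frac{dt}{t}\qquad\text{on }L^2(X),\]
which yields
\[\tilde\varphi(s\sqrt{L})f(y)=c\int_0^\infty\!\!\int_X K_{s,t}(y,z)\,\varphi_2(t\sqrt{L})f(z)\,d\mu(z)\,\frac{dt}{t}\]
with $K_{s,t}$ the kernel of $\tilde\varphi(s\sqrt{L})\Psi(t\sqrt{L})$. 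Because both $\tilde\varphi$ and $\Psi$ vanish at $0$, Lemma~\ref{le2.1}(ii) supplies the quantitative kernel bound
\[|K_{s,t}(y,z)|\leq C\,\min\!\Big(\tfrac{s}{t},\tfrac{t}{s}\Big)\,\frac{1}{V(y,\max(s,t))}\Big(1+\tfrac{d(y,z)}{\max(s,t)}\Big)^{-n-\beta}\]
for any prescribed $\beta>0$.

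Next, I would fix $0<r<p$. For every $z'\in B(z,\alpha t)$ the definition of $\varphi^{\ast}_{2,L,\alpha}f$ gives $|\varphi_2(t\sqrt{L})f(z)|\leq \varphi^{\ast}_{2,L,\alpha}f(z')$, so averaging over $B(z,\alpha t)$ yields $|\varphi_2(t\sqrt{L})f(z)|^r\leq \mathcal{M}\bigl[(\varphi^{\ast}_{2,L,\alpha}f)^r\bigr](z)$, where $\mathcal{M}$ denotes the Hardy--Littlewood maximal operator. Inserting this into the reproducing formula, using the subadditivity $(a_1+a_2)^r\leq a_1^r+a_2^r$ to push the $r$-power through the integrals, using \eqref{e1.3} together with the tail decay of $K_{s,t}$ to transfer the center of integration from $y$ to $x$ (for $d(x,y)<\alpha s$), and using the integrability $\int_0^\infty \min(s/t,t/s)\,\tfrac{dt}{t}<\infty$ in the $t$-variable, I expect to arrive at the pointwise inequality
\[|\tilde\varphi(s\sqrt{L})f(y)|^r\leq C\,\mathcal{M}\bigl[(\varphi^{\ast}_{2,L,\alpha}f)^r\bigr](x),\qquad d(x,y)<\alpha s.\]
Taking the supremum in $(y,s)$ and combining with the bound on the $\varphi_2$-piece, the Hardy--Littlewood maximal inequality on $L^{p/r}(X)$ (valid because $p/r>1$) then delivers \eqref{e2.7}; restoring the original apertures via Lemma~\ref{le2.2} completes the proof.

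The hard part will be the pointwise $r$-domination just described. One must choose $\beta$ large enough to convert tail kernel decay into averages over progressively larger balls, interchange the order of integration in $(t,z)$ carefully so that the $r$-subadditivity can be applied inside the $t$-integral, and arrange everything so that the final bound contains only a \emph{single} Hardy--Littlewood maximal operator acting on $(\varphi^{\ast}_{2,L,\alpha}f)^r$; a composition of two such operators would fail to be bounded on $L^p$ for $p\leq 1$.
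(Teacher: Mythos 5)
Your overall architecture agrees with the paper's: reduce to a common aperture via Lemma~\ref{le2.2}, split $\varphi_1=\varphi_2+\tilde\varphi$ with $\tilde\varphi(0)=0$, expand $\tilde\varphi(s\sqrt L)f$ by a Calder\'on reproducing formula built from $\varphi_2$, and invoke Lemma~\ref{le2.1}(ii) for the off-diagonal kernel decay. The divergence -- and the gap -- is in the final step, which you yourself flag as ``the hard part.'' The mechanism you propose there does not work: the subadditivity $(a_1+a_2)^r\le a_1^r+a_2^r$ is a statement about finite sums and does \emph{not} pass to integrals against $d\mu(z)\,\frac{dt}{t}$ (for $r<1$ the reverse inequality can hold; e.g.\ $(\int g)^r>\int g^r$ for $g=\chi_E$ with $\mu(E)<1$). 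Even setting that aside, raising the kernel to the power $r$ destroys the normalization: $\int_X V(y,\max(s,t))^{-r}(1+d(y,z)/\max(s,t))^{-(n+\beta)r}\,d\mu(z)$ is of order $V(y,\max(s,t))^{1-r}$, which is unbounded. If instead you keep the kernel to the first power and replace $|\varphi_2(t\sqrt L)f(z)|$ by $\bigl(\mathcal M[(\varphi^{\ast}_{2,L,\alpha}f)^r](z)\bigr)^{1/r}$ as in your averaging step, you are left with an averaging operator applied to a maximal function evaluated at $z$, and transferring the base point from $z$ to $x$ produces exactly the composition of two maximal operators you correctly say must be avoided. So the claimed pointwise bound $|\tilde\varphi(s\sqrt L)f(y)|^r\le C\,\mathcal M[(\varphi^{\ast}_{2,L,\alpha}f)^r](x)$ is not established by the route you describe.

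There are two standard ways to close this, and the paper uses one of them. The paper's proof of Proposition~\ref{prop2.3} never takes $r$-th powers: it bounds $\sup_{d(x,y)<t}|\tilde\varphi(t\sqrt L)f(y)|$ pointwise by the Peetre-type (tangential) maximal function $\sup_{z,s}|\varphi_2(s\sqrt L)f(z)|(1+d(x,z)/s)^{-\lambda}$ with $\lambda>n/p$, using only the first-power kernel integral, and then controls the $L^p$ norm of that tangential maximal function by $\|\varphi^{\ast}_{2,L,1}f\|_{L^p}$ via the level-set/covering argument of Calder\'on--Torchinsky (Theorem 2.4 of \cite{CT}) combined with Lemma~\ref{le2.2}. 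The alternative (which is what your $r$-power idea is reaching for, and which the paper does use in Theorem~\ref{th3.1}) is the Str\"omberg--Torchinsky bootstrap: write $|\varphi_2(t\sqrt L)f(z)|=|\varphi_2(t\sqrt L)f(z)|^{\theta}\,|\varphi_2(t\sqrt L)f(z)|^{1-\theta}$, dominate the $(1-\theta)$-power by $(1+d(x,z)/t)^{N(1-\theta)}\bigl(M^{**}_{L,\varphi,N}f(x)\bigr)^{1-\theta}$, absorb the remaining weight into the kernel to produce a single Hardy--Littlewood average of $|\varphi^{+}_Lf|^{\theta}$ centered at $x$, arrive at $M^{**}\le C\,\mathcal M(\cdot)\,(M^{**})^{1-\theta}$, and divide after checking $M^{**}<\infty$ a.e. Either repair is genuinely needed; as written, your argument has a hole precisely at the step that makes the proposition nontrivial.
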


\begin{proof} The argument is similar to that of \cite[Proposition 3.1]{SY} with minor  modifications.
We give a brief argument of this proof for completeness and convenience for the reader.

 For any $0<\alpha_2\leq \alpha_1$, we apply Lemma \ref{le2.2} to have that
$$\left\|\varphi^{\ast}_{L, \alpha_1}f\right\|_{L^p({X})}
\leq C(p,n)\left(1+{2\alpha_1\over\alpha_2}\right)^{n/p}
\left\|\varphi^{\ast}_{L, \alpha_2}f\right\|_{L^p({X})}
$$
for any  $\varphi \in {\mathscr S}(\mathbb R)$.
Now, we let $\psi(x):=\varphi_1(x)-\varphi_2(x)$, and then the proof of \eqref{e4.1} reduces to show that
\begin{eqnarray}\label{e2.8}
\left\|\psi^{\ast}_{L, 1}f\right\|_{L^p({X})}\leq
C\left\|\varphi^{\ast}_{2, L, 1}f\right\|_{L^p({X})}.
\end{eqnarray}

Let us show \eqref{e2.8}.  Let $\varphi\in C^{\infty}_0(\mathbb R)$ be
even, $\mbox{supp}\,\varphi \subset (-1, 1)$. Let $\Phi$ denote the Fourier transform of
$\varphi$   and set $\Psi(x):=x^{2\kappa}\Phi(x)$ and $2\kappa>{(n+1)/p}$.
By the  spectral theory (\cite{Y}), we have
$$
f=C_{\Psi, \  \varphi_2}\int_0^\infty \Psi(s\sqrt{L})\varphi_2(s \sqrt{L})f \, \frac{ds}{s}.
$$
Therefore,
$$
\psi(t\sqrt{L})f(x)=C \int_0^\infty \left(\psi(t\sqrt{L})\Psi(s \sqrt{L})\right)\varphi_2(s \sqrt{L})f(x) \, \frac{ds}{s}.
$$
 Let us denote the kernel of $\psi(t\sqrt{L})\Psi(s\sqrt{L})$ by
 $ K_{\psi(t\sqrt{L})\Psi(s\sqrt{L})}(x,y)$. For  every  $\lambda\in (\frac{n}{p}, \ 2\kappa)$, we write
\begin{eqnarray}\label{e2.9}\\
&&\hspace{-0.5cm}\sup\limits_{d(x,y)< t}|\psi(t\sqrt{L})f(y)| \nonumber\\
&=&C\sup\limits_{d(x,y)<t}\big|\int_0^\infty\!\!\int_X K_{\psi(t\sqrt{L})\Psi(s\sqrt{L})}(y,z)
\varphi_2(s\sqrt{L})f(z) \, d\mu(z)\,\frac{ds}{s}\Big|\nonumber\\
&\leq&  \sup\limits_{z,s} \big|\varphi_2(s\sqrt{L})f(z)\big|\Big(1+\frac{d(x,z)}{s}\Big)^{-\lambda}
 \sup\limits_{d(x,y)< t}\int_0^\infty\!\!\int_X \big|K_{\psi(t\sqrt{L})\Psi(s\sqrt{L})}(y,z)
 \big|\Big(1+\frac{d(x,z)}{s}\Big)^{\lambda}\, d\mu(z)\,\frac{ds}{s}.\nonumber
\end{eqnarray}
By (ii) of Lemma~\ref{le2.1}, it follows  that  for  $\eta\in (\lambda, 2\kappa)$,
 \begin{align*}
 \big|K_{\psi(t\sqrt{L})\Psi(s\sqrt{L})}(y,z)\big|\leq C
 \min\left( \left({s\over t}\right)^{2\kappa}, \left({t\over s}\right)^{2}\right)
  {1\over \Big(1+ \frac{d(y,z)}{\max(s,t)}\Big)^{n+\eta}}\frac{1}{V(y,\max(s,t))}.
 \end{align*}
 For any $d(x,y)<t$, one can compute
 \begin{align*}
 \int_{X} \frac{1}{V(y,\max(s,t))} {1\over \Big(1+ \frac{d(y,z)}{\max(s,t)}\Big)^{n+\eta}}
 \Big(1+\frac{d(x,z)}{s}\Big)^{\lambda} \, {d\mu(z)}
 \leq C\max \left(1, \left({t\over s}\right)^{\eta} \right),
\end{align*}
which implies,
\begin{eqnarray*}
 \int_{{X}} \big|K_{\psi(t\sqrt{L})
\Psi(s\sqrt{L})}(y,z)\big|\left(1+\frac{d(x,z)}{s}\right)^{\lambda} \, d\mu(z)
&\leq&   C  \min\left( \left({s\over t}\right)^{2\kappa}, \left({t\over s}\right)^{2}\right)
  \max \left(1, \left({t\over s}\right)^{\eta} \right)\\
 &\leq&   C \min\left( \left({s\over t}\right)^{2\kappa-\eta},  \left({t\over s}\right)^{2}\right),
\end{eqnarray*}
for any $d(x,y)<t$. Hence
\begin{eqnarray*}
 \sup\limits_{d(x,y)< t}\int_0^\infty\!\!\int_{{X}} \big|K_{\psi(t\sqrt{L})
\Psi(s\sqrt{L})}(y,z)\big|\left(1+\frac{d(x,z)}{s}\right)^{\lambda} \, {d\mu(z)}\,\frac{ds}{s}
 \leq C\int_0^\infty \min\left( \left({s\over t}\right)^{2\kappa-\eta},  \left({t\over s}\right)^{2}\right) \frac{ds}{s}\leq C.
\end{eqnarray*}
This, in combination with
\eqref{e2.9}  and the condition $\lambda\in (\frac{n}{p}, \ 2\kappa)$, implies
\begin{eqnarray*}
\left\|\psi^{\ast}_{L, 1}f\right\|_{L^p({X})}=\Big\|\sup\limits_{d(x,y)< t}|\psi(t\sqrt{L})f(y)|\Big\|_{L^p_x({X})}&\leq&
C\Big\|\sup\limits_{z,s} \big|\varphi_2(s\sqrt{L})f(z)\big|\Big(1+\frac{d(x,z)}{s}
\Big)^{-\lambda}\Big\|_{L^p_x({X})}\nonumber\\
&\leq& C\left\|\sup\limits_{d(x,y)<  t}|\varphi_2(t\sqrt{L})f(y)|\right\|_{L^p_x({X})}\\
&= &C\Big\|\varphi^{\ast}_{2, L, 1}f\Big\|_{L^p({X})},
\end{eqnarray*}
where  the second inequality above  can be proved easily by  combining Lemma \ref{le2.2} and
 the  argument of \cite[Theorem 2.4]{CT}.
This completes  the proof of Proposition~\ref{prop2.3}.
\end{proof}

\medskip

\section{Equivalence of Hardy spaces $H^p_{L,  {max}}(X) $ and $ H^p_{L, {rad}}({X})$}
\setcounter{equation}{0}

Assume that the metric measure space  $X$ satisfies  the
doubling conditions~\eqref{e1.1} and \eqref{e1.2} with exponent $n$.
In this section we  will show  (i) of Theorem~\ref{th1.1} to obtain an equivalence
of Hardy spaces $H^p_{L,  {  rad}}(X) $ and $ H^p_{L, {  max}}({X})$.
By \eqref{e1.4} and \eqref{e1.7}, we have
 \begin{eqnarray}\label{ed}
 H^p_{L, max}({X})\subseteq
H^p_{L, rad }({X}).
\end{eqnarray}
Now for every even functions $\varphi \in {\mathscr S}(\mathbb R)$ with $ \varphi(0)=1$, and
for every $f\in L^2({X})$ we define
$$\varphi^{\ast}_{L}(f)(x)=\sup\limits_{d(x,y)<  t}|\varphi(t\sqrt{L})f(y)|
$$
and
$$\varphi^{+}_{L}(f)(x)=\sup\limits_{t>0}|\varphi(t\sqrt{L})f(x)|
$$
Then we have  the the following result.

 \begin{theorem}\label{th3.1}
Let $(X,d,\mu)$ be as in \eqref{e1.1} and \eqref{e1.2}.
 Suppose  that an operator $L$ satisfies ${\bf   (H1)}$  and ${\bf   (H2)}$.
Fix $0<p\leq 1$. Then there exists a constant  $C=C(p,\varphi)>0$  such that for every $f\in L^2(X)$,
\begin{align}\label{e3.1}
\|\varphi^*_L(f)\|_{L^p(X)} \leq C \|\varphi^+_{L}(f)\|_{L^p(X)},
\end{align}
 and hence by \eqref{ed}
$$
 H^p_{L, {  max}}({X}) \simeq  H^p_{L, { rad}}({X})  .
$$
\end{theorem}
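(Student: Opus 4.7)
The plan is to prove the pointwise estimate
$$\varphi^{*}_L f(x)\le C\bigl[\mathcal{M}\bigl((\varphi^{+}_L f)^r\bigr)(x)\bigr]^{1/r}$$
for some $r\in(0,p)$, where $\mathcal{M}$ denotes the Hardy--Littlewood maximal operator on $(X,d,\mu)$. Once established, the $L^{p/r}$-boundedness of $\mathcal{M}$ (valid since $p/r>1$) yields $\|\varphi^{*}_L f\|_{L^p(X)}\le C\|\varphi^{+}_L f\|_{L^p(X)}$, which combined with \eqref{ed} proves the theorem.

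To this end I would introduce the Peetre-type tangential maximal function
$$\varphi^{**}_{L,\lambda}f(x):=\sup_{s>0,\,z\in X}\frac{|\varphi(s\sqrt L)f(z)|}{(1+d(x,z)/s)^\lambda},\qquad \lambda>n/r.$$
Choosing $s=t$ and $z=y$ with $d(x,y)<t$ immediately gives $\varphi^{*}_L f(x)\le 2^\lambda\varphi^{**}_{L,\lambda}f(x)$, so it suffices to prove the pointwise estimate with $\varphi^{**}_{L,\lambda}f$ in place of $\varphi^{*}_L f$. For this I would invoke the Calder\'on reproducing formula $f=c_{\Psi,\varphi}\int_0^\infty \Psi(u\sqrt L)\,\varphi(u\sqrt L)f\,du/u$ used in the proof of Proposition~\ref{prop2.3} with $\Psi(\lambda)=\lambda^{2\kappa}\Phi(\lambda)$ and $\kappa$ large, act by $\varphi(s\sqrt L)$, and use the kernel bound
$$\bigl|K_{\varphi(s\sqrt L)\Psi(u\sqrt L)}(z,w)\bigr|\le C\min\!\Bigl(\tfrac{s}{u},\tfrac{u}{s}\Bigr)^{\delta}\frac{1}{V(z,s\vee u)\bigl(1+d(z,w)/(s\vee u)\bigr)^{n+\beta}},$$
derived from Lemma~\ref{le2.1}(ii) after the splitting $\varphi=[\varphi-\varphi_0]+\varphi_0$ (so that the first summand vanishes at the origin and the second is handled by direct semigroup calculation), with $\beta>0$ made as large as needed by choosing $\kappa$ correspondingly large.

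The core of the argument is an absorption trick. Writing
$$|\varphi(u\sqrt L)f(w)|\le\bigl(\varphi^{+}_L f(w)\bigr)^{r}\bigl(\varphi^{**}_{L,\lambda}f(x)\bigr)^{1-r}\bigl(1+d(x,w)/u\bigr)^{\lambda(1-r)},$$
inserting this into the reproducing identity for $\varphi(s\sqrt L)f(z)$, dividing by $(1+d(x,z)/s)^\lambda$, and transferring the polynomial weight from $z$ to $w$ using the triangle inequality together with doubling, one bounds the resulting double integral in $(u,w)$ by an annular decomposition of $X$ around $x$ at scale $s\vee u$. Provided the parameters are chosen consistently (in particular $\beta>\lambda(1-r)$ for convergence of the $w$-integral and $\lambda>n/r$ for the Peetre--maximal comparison), taking the supremum over $(s,z)$ gives
$$\varphi^{**}_{L,\lambda}f(x)\le C\,\bigl(\varphi^{**}_{L,\lambda}f(x)\bigr)^{1-r}\,\mathcal{M}\bigl((\varphi^{+}_L f)^{r}\bigr)(x),$$
which rearranges to the desired pointwise inequality.

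The main obstacle is securing the a priori finiteness of $\varphi^{**}_{L,\lambda}f(x)$ almost everywhere, which is needed to justify the rearrangement. This is handled by a standard truncation argument: one restricts the supremum defining $\varphi^{**}_{L,\lambda}f$ to a bounded range of scales $s\in[\varepsilon,\varepsilon^{-1}]$ and a bounded subregion of $X$, performs the absorption on the finite truncated quantity, and then passes to the limit as $\varepsilon\to 0$ using $f\in L^2(X)$ and the spectral-theoretic continuity of $s\mapsto\varphi(s\sqrt L)f$. Once this technical point is settled, the remaining bookkeeping of exponents is routine, and the chain
$$\varphi^{*}_L f(x)\le 2^\lambda\varphi^{**}_{L,\lambda}f(x)\le C[\mathcal{M}((\varphi^{+}_L f)^{r})(x)]^{1/r}$$
together with the $L^{p/r}$-boundedness of $\mathcal{M}$ finishes the proof.
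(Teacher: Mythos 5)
Your overall strategy coincides with the paper's: compare $\varphi^*_Lf$ with a Peetre--type tangential maximal function, run a Str\"omberg--Torchinsky absorption argument through a Calder\'on reproducing formula, and finish with the $L^{p/r}$-boundedness of the Hardy--Littlewood maximal operator (the paper takes $N=2(n+1)/p$ and $\theta=(2n+1)p/(2(n+1))$ in the roles of your $\lambda$ and $r$, and deals with the a priori finiteness of the tangential maximal function exactly as you indicate). The one step that fails as written is the claimed two-sided kernel bound
$$\bigl|K_{\varphi(s\sqrt L)\Psi(u\sqrt L)}(z,w)\bigr|\le C\min\Bigl(\frac{s}{u},\frac{u}{s}\Bigr)^{\delta}\frac{1}{V(z,s\vee u)}\Bigl(1+\frac{d(z,w)}{s\vee u}\Bigr)^{-n-\beta}.$$
For $u\gg s$ this is false: such a bound would force $\|\varphi(s\sqrt L)\Psi(u\sqrt L)\|_{L^2\to L^2}\lesssim (s/u)^{\delta}$ (Schur test plus the symmetric bound for the adjoint), whereas the spectral multiplier $\varphi(s\lambda)\Psi(u\lambda)$ evaluated at $\lambda\sim c/u$ tends to $\varphi(0)\Psi(c)=\Psi(c)\neq 0$ as $u/s\to\infty$, so the operator norm does not decay. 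This is exactly why Lemma~\ref{le2.1}(ii) requires \emph{both} functions to vanish at the origin. Your splitting $\varphi=[\varphi-\varphi_0]+\varphi_0$ does not repair this: the piece $\varphi_0(s\sqrt L)\Psi(u\sqrt L)$ inherits the same defect from $\varphi_0(0)=1$. Without any decay in $u/s$, your $u$-integral over $(s,\infty)$ diverges logarithmically and the absorption inequality cannot be closed.

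The fix is the paper's truncated reproducing formula: with $\eta(x):=c_{\Psi,\varphi}\int_x^\infty\Psi(y)\varphi(y)\,dy/y$ (an even Schwartz function with $\eta(0)=1$) one writes $f=\eta(s\sqrt L)f+c_{\Psi,\varphi}\int_0^s\Psi(u\sqrt L)\varphi(u\sqrt L)f\,du/u$. The entire large-scale part of the integral then collapses into the single term $\eta(s\sqrt L)\varphi(s\sqrt L)f$, which is handled by the one-parameter bound of Lemma~\ref{le2.1}(i) and immediately produces the factor $|\varphi(s\sqrt L)f(z)|\le\varphi^+_Lf(z)$; for the remaining integral over $u\le s$ only the one-sided decay $(u/s)^{2\kappa}$ coming from $\Psi(0)=0$ is needed, and that Lemma~\ref{le2.1} does supply. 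With this modification the rest of your argument (the absorption with $\lambda>n/r$ and $\beta>\lambda(1-r)$, and the finiteness-via-truncation step) goes through and reproduces the paper's proof.
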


\begin{proof}

For every $N>0$, we define
$$
M_{L,\varphi,N}^{**}(f)(x):= \sup\limits_{y\in X, s>0} \frac{|\varphi(s\sqrt{L})f|}{\left(1+\frac{d(x,y)}{s}\right)^N}.
$$
By \eqref{e1.4}, we have
\begin{align}\label{e3.2}
\varphi^*_L(f)(x)\leq 2^N M_{L,\varphi,N}^{**}(f)(x).
\end{align}
We now claim that if $0<\theta<1$ and $N\theta>2n$, then there exists $C=C(p,\varphi,N, \theta)>0$
such that for every $f\in L^2(X)$,
\begin{align}\label{e3.3}
M_{L,\varphi,N}^{**}(f)(x)\leq C \Big[\mathcal{M}\big((\varphi^+_{L}(f))^\theta\big)(x)\Big]^{1/\theta}
  \ {\rm  a.e.}\  x\in X.
\end{align}
If the claim is proved, then we can choose $N=2(n+1)/p$ and $\theta= \frac{(2n+1)p}{2(n+1)}$
and apply the $L^r (r>1)$ boundedness of  Hardy--Littlewood maximal operator to obtain that
for any $f\in L^2(X)$
\begin{align*}
\|M_{L,\varphi,N}^{**}(f)\|_{L^p(X)}\leq C
\left\|\Big[\mathcal{M}\big((\varphi^+_{L}(f))^\theta\big)(x)\Big]^{1/\theta}\right\|_{L^p(X)}
\leq C \|\varphi^+_{L}(f)\|_{L^p(X)},
\end{align*}
which, together with (\ref{e3.2}), yields  (\ref{e3.1}).

It remains to prove (\ref{e3.3}).  Let $\varphi\in C^{\infty}_0(\mathbb R)$ be
even, $\mbox{supp}\,\varphi \subset (-1, 1)$. Let $\Phi$ denote the Fourier transform of
$\varphi$   and set $\Psi(x):=x^{2\kappa}\Phi(x)$,
$x\in{\mathbb{R}}$ and $\kappa>N/2$.  For every $f\in L^2({X})$ one can write
 \begin{align}\label{e3.4}
f
&=\lim_{\epsilon\to 0}c_{\Psi,\varphi}\int_{\epsilon}^{1/\epsilon}
\Psi({t\sqrt{L}}) \varphi(t\sqrt{L})f \, \frac{{\rm d}t}{t}
\end{align}
with the integral converging in $L^2({X}).$

Set
\begin{align*}
\eta(x)
:= c_{\Psi,\varphi} \int_1^{\infty} \Psi(tx) \varphi(tx) \frac{dt}{t}
=
c_{\Psi,\varphi} \int_x^{\infty} \Psi(y)  \varphi(y) \frac{{dy}}{y}, \quad x\neq 0
\end{align*}
with $\eta(0)=1$. It follows that  $\eta\in{\mathscr S}({\mathbb R})$ is an even function.
By the spectral theory (\cite{Yo}) again, one can write, for any $s>0$,
\begin{align}\label{e3.5}
\eta(s\sqrt{L})f=c_{\Psi,\varphi}\int_s^\infty
\Psi({t\sqrt{L}})\varphi(t\sqrt{L})f \, \frac{{\rm d}t}{t},
\end{align}
which, together with (\ref{e3.4}), yields that for any $f\in L^2(X)$,
\begin{align}\label{e3.6}
f&=\eta(s\sqrt{L})f+c_{\Psi,\varphi}\int_0^s
\Psi({t\sqrt{L}})\varphi(t\sqrt{L})f \, \frac{{\rm d}t}{t}.
\end{align}
Let $0<\theta<1$, $N\theta>2n$. By (\ref{e3.6}),  there holds

\begin{align*}
\frac{|\varphi(s\sqrt{L})f(y)|}{\big(1+\frac{d(x,y)}{s}\big)^N}&\leq \frac{\big|\eta(s\sqrt{L})
\varphi(s\sqrt{L})f(y)\big|}{\big(1+\frac{d(x,y)}{s}\big)^N}
+\frac{c_{\Psi,\varphi}}{\big(1+\frac{d(x,y)}{s}\big)^N}\Big|\int_0^s
{\varphi(s\sqrt{L})\Psi({t\sqrt{L}})}\varphi(t\sqrt{L})f(y) \, \frac{{\rm d}t}{t}\Big|\\
&=:{ I}+ { II}.
\end{align*}

Now we  apply an argument of Str\"omberg and Torchinsky  as   \cite[Chapter V, Theorem 5]{ST}
on page 64.  For the term $I$, we use (i) of Lemma \ref{le2.1} to obtain

 \begin{align*}
{I}&\leq \frac{C}{\big(1+\frac{d(x,y)}{s}\big)^N}\, \int_X \frac{1}{V(z,s)}
  {1\over \left(1+ \frac{d(y,z)}{s}\right)^{N}} \big|\varphi(s\sqrt{L})f(z)\big|\, {d\mu(z)}\\
 &\leq C\int_X \frac{1}{V(z,s)}
  {1\over \left(1+ \frac{d(x,z)}{s}\right)^{N}} \big|\varphi(s\sqrt{L})f(z)\big|\, {d\mu(z)}\\
 &\leq  C\int_X \frac{1}{V(x,s)} \frac{\big|\varphi(s\sqrt{L})f(z)\big|^{\theta}}
  {\left(1+ \frac{d(x,z)}{s}\right)^{N\theta-n}} \, {d\mu(z)} \ \   \Big(M_{L,\varphi,N}^{**}(f)(x)\Big)^{1-\theta},
\end{align*}
where in the last inequality above we have used (\ref{e1.3}). By noting that $N\theta >2n$, we obtain,

\begin{align}\label{e3.7}
{ I}\leq C \mathcal{M}\Big(\big|\varphi_{L}^+f\big|^{\theta}\Big)(x)
\Big(M_{L,\varphi,N}^{**}(f)(x)\Big)^{1-\theta}.
\end{align}

\medskip

Let us estimate the term ${II}$.
One writes
$
\Psi(tx)\psi(sx)=
 ({t\over s})^{2\kappa} [\Phi(tx)(sx)^{2\kappa}\psi(sx)].
$
We then apply an argument as in (ii)  of Lemma \ref{le2.1} to know that
the kernel $K_{\Psi({t\sqrt{L}})\varphi(s\sqrt{L}) }(x,y)$
 of $ \Psi({t\sqrt{L}})\varphi(s\sqrt{L})$
satisfies
\begin{eqnarray*}
\big|K_{\Psi({t\sqrt{L}})\varphi(s\sqrt{L})}(y,z)\big|
\leq  C\, \Big(\frac{t}{s}\Big)^{2\kappa} \frac{1}{V(z,s)}
  {1\over \left(1+ \frac{d(y,z)}{s}\right)^{N}}
\end{eqnarray*}
for all $s>t>0$ and $y,z\in {X}$. This yields
\begin{align*}
{II}&\leq \frac{C}{\big(1+\frac{d(x,y)}{s}\big)^N}\, \int_0^s\!\!\int_X \Big(\frac{t}{s}\Big)^{2\kappa} \frac{1}{V(z,s)}
  {1\over \left(1+ \frac{d(y,z)}{s}\right)^{N}} \big|\varphi(t\sqrt{L})f(z)\big| {d\mu(z)} \frac{dt}{t}\\
  &\leq C\, \int_0^s\!\!\int_X \Big(\frac{t}{s}\Big)^{2\kappa} \frac{1}{V(z,s)}
  {1\over \left(1+ \frac{d(x,z)}{s}\right)^{N}} \big|\varphi(t\sqrt{L})f(z)\big| {d\mu(z)} \frac{dt}{t}\\
  &\leq C \int_0^s\!\!\int_X \Big(\frac{t}{s}\Big)^{2\kappa-N} \frac{1}{V(z,t)}
  \frac{\big|\varphi^+_{L}f(z)\big|^\theta}{\left(1+ \frac{d(x,z)}{t}\right)^{N\theta}}
   {d\mu(z)} \frac{dt}{t}\ \  \Big(M_{L,\varphi,N}^{**}(f)(x)\Big)^{1-\theta}\\
     &\leq C \int_0^s\!\!\int_X \Big(\frac{t}{s}\Big)^{2\kappa-N} \frac{1}{V(x,t)}
  \frac{\big|\varphi^+_{L}f(z)\big|^\theta}{\left(1+ \frac{d(x,z)}{t}\right)^{N\theta-n}}
   {d\mu(z)} \frac{dt}{t}\ \  \Big(M_{L,\varphi,N}^{**}(f)(x)\Big)^{1-\theta},
\end{align*}
where in the last inequality above we have used (\ref{e1.3}). Since $2\kappa>N$ and $N\theta>2n$, we have
\begin{align}\label{e3.8}
{II}  &\leq C \int_0^s\!\! \Big(\frac{t}{s}\Big)^{2\kappa-N} \frac{dt}{t}
  \mathcal{M}\Big({ |\varphi^+_{L}f |^\theta}\Big)(x) \ \   \Big(M_{L,\varphi,N}^{**}(f)(x)\Big)^{1-\theta}\\
  &\leq C \mathcal{M}\Big({ |\varphi^+_{L}f |^\theta}\Big)(x) \ \   \Big(M_{L,\varphi,N}^{**}(f)(x)\Big)^{1-\theta}. \nonumber
\end{align}

Combining \eqref{e3.7} and \eqref{e3.8}, we have proved that
\begin{align}\label{e3.9}
M_{L,\varphi,N}^{**}(f)(x)\leq C\mathcal{M}\Big({ |\varphi^+_{L}f(z) |^\theta}\Big)(x) \ \
 \Big(M_{L,\varphi,N}^{**}(f)(x)\Big)^{1-\theta}.
\end{align}
Finally, it can be verified that  for any $f\in L^2(X)$, $M_{L,\varphi,N}^{**}(f)(x)<\infty$, for  $a.e. \ x\in X$.
From \eqref{e3.9},      \eqref{e3.3} follows readily.
The proof of Theorem~\ref{th3.1} is complete.
\end{proof}

\medskip

\section{Proof of Theorem~\ref{th1.1}}
\setcounter{equation}{0}

In this section we   continue to  show (ii) of Theorem~\ref{th1.1} to give a
$(p, \infty, M)$-atomic representation  for the Hardy spaces
 $ H^p_{L,max}({X})$.
To do it, we first recall  the following Whitney type covering lemma on space of homogeneous type $X$.

\begin{lemma}\label{le4.1}
Suppose that $O\subseteq X$ is a open set with finite measure.
There exists a sequence of points $\{\xi_k\}_{k=1}^\infty\in O$ and a collection of balls $B(\xi_k, \rho_k)$
where $\rho_k:=d(\xi_k,O^c)$ such that

\medskip

i) $\bigcup_kB(\xi_k, \rho_k/2)=O$;

\medskip

ii) $\{B(\xi_k, \rho_k/10)\}_{k=1}^\infty$ are disjoint.

\end{lemma}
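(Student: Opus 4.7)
The plan is to adapt the classical Whitney-type selection argument to the doubling metric setting. First, I would set $\rho(x) := d(x, O^c)$ for $x \in O$. Since $O$ is open, $\rho(x) > 0$ throughout $O$, and a standard triangle-inequality argument shows $|\rho(x)-\rho(y)| \leq d(x,y)$, i.e.\ $\rho$ is $1$-Lipschitz. Moreover $B(x, \rho(x)) \subseteq O$ by definition, so any ball $B(\xi_k, \rho_k)$ produced below automatically lies inside $O$, handing us the easy inclusion $\bigcup_k B(\xi_k,\rho_k/2) \subseteq O$.

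Next, I would apply Zorn's lemma to the family $\mathcal{F}:=\{B(x, \rho(x)/10) : x \in O\}$ to extract a maximal disjoint subfamily, labelled $\{B(\xi_k,\rho_k/10)\}$ with $\rho_k := \rho(\xi_k)$. Property (ii) then holds by construction. Countability of the family is forced by the standing hypothesis $\mu(O)<\infty$ together with the observation that every ball in our setting has strictly positive measure: if $\mu(B(x,r))=0$ for some $r>0$, doubling would give $\mu(B(x,2^k r))=0$ for all $k\geq 0$, and since $X=\bigcup_k B(x,2^k r)$ this would contradict $\mu(X)=\infty$. Pairwise disjoint subsets of $O$ of positive measure can thus be at most countable.

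For the remaining inclusion $O \subseteq \bigcup_k B(\xi_k,\rho_k/2)$, fix $x \in O$. By maximality of the selected family, the ball $B(x,\rho(x)/10)$ must meet some $B(\xi_k,\rho_k/10)$; otherwise we could enlarge the family, contradicting maximality. Hence
\[
d(x,\xi_k) < \frac{\rho(x)}{10} + \frac{\rho_k}{10}.
\]
Combining this with the $1$-Lipschitz bound $\rho(x) \leq \rho_k + d(x,\xi_k)$ yields $\rho(x) \leq \tfrac{11}{9}\rho_k$, and substituting back gives $d(x,\xi_k) < \tfrac{2}{9}\rho_k < \tfrac{1}{2}\rho_k$, i.e.\ $x \in B(\xi_k,\rho_k/2)$, as required.

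The argument is essentially routine and will not present serious obstacles; the only two points worth care are the countability of the disjoint selection (handled by the positivity-of-ball-measure argument above, which avoids any separability assumption on $(X,d)$) and the numerology of the constants $1/10$ and $1/2$, which are chosen precisely so that the Lipschitz estimate plus maximality produces the strict inclusion in the last displayed equation.
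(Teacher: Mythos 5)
Your argument is correct and is essentially the standard Vitali/Whitney selection argument that the paper itself omits and defers to Coifman--Weiss: the numerology checks out ($d(x,\xi_k)<\tfrac{2}{9}\rho_k<\tfrac{1}{2}\rho_k$), and your countability argument via positivity of ball measures is sound given the paper's standing assumptions that $\mu$ is doubling and $\mu(X)=\infty$. The only point left implicit is that $O^c\neq\emptyset$, so that $\rho(x)=d(x,O^c)$ is finite; this follows from $\mu(O)<\infty=\mu(X)$.
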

\begin{proof}
The proof of this lemma is essentially given in \cite[Chapter III, Theorem 1.3]{CW}
and is omitted. See also \cite{DKKP, MS}.
\end{proof}

\noindent
{\it Proof of (ii) of Theorem~\ref{th1.1}}.
It suffices to show that for
 $f\in H^p_{L,max}(X) \cap L^2({X}),$    $f$ has a $(p, \infty, M)$
atomic representation.

We start with a suitable
version of the Calder\'on reproducing formula. Let $\varphi\in C^{\infty}_0(\mathbb R)$ be an even function with
 $\mbox{supp}\,\varphi \subset (-1, 1)$. Let $\Phi$ denote the Fourier transform of
$\varphi$,
  and set $\Psi(x):=x^{2M}\Phi(x)$,
$x\in{\mathbb{R}}$. By the spectral theory (\cite{Yo}), for
every $f\in L^2({X})$ one can write
 \begin{align}\label{e4.1}
f
&=\lim_{\epsilon\to 0}c_{\Psi}\int_{\epsilon}^{1/\epsilon}
\Psi({t\sqrt{L}})t^2L e^{-t^2L}f \, \frac{{\rm d}t}{t}
\end{align}
with the integral converging in $L^2({X}).$

Set
\begin{align*}
\eta(x)
:= c_{\Psi} \int_1^{\infty} t^2 x^2\Psi(tx) e^{-t^2x^2} \frac{dt}{t}
=
c_{\Psi} \int_x^{\infty} y\Psi(y)  e^{-y^2} {dy}, \quad x\neq 0
\end{align*}
with $\eta(0)=1$.  Then  $\eta\in{\mathscr S}({\mathbb R})$ is an even function.
By the spectral theory (\cite{Yo}) again, one has
\begin{align}\label{e4.2}
c_{\Psi}\int_a^b
\Psi({t\sqrt{L}})t^2Le^{-t^2L}f \, \frac{{\rm d}t}{t}=\eta(a\sqrt{L})f(x)-\eta(b\sqrt{L})f(x).
\end{align}
Define,
$${\mathscr M}_Lf(x):=\sup\limits_{d(x,y)<5t}\Big(|t^2Le^{-t^2L}f(y)|+|\eta(t\sqrt{L})f(y)|\Big).
$$
 By Proposition~\ref{prop2.3},  it follows that
$$\|{\mathscr M}_Lf\|_{L^p({X})}\leq C\|f\|_{\HMAX}, \ \ \ \ \  0<p\leq 1.
$$

 In the sequel, if $O$ is an
open subset of ${\mathbb R}^n$, then the ``tent" over $O$, denoted
by ${\widehat O}$, is given as ${\widehat O}:=\{(x,t)\in X\times (0,+\infty):  B(x, 4t)\subset O\}$.
For $ i\in{\Bbb Z}$, we  define the family of sets $O_i: =\{x\in {X}: {\mathscr M}_Lf(x)>2^i\}$.
We obtain a decomposition for $X\times (0,+\infty)$ as follows:
\begin{align}\label{g}
X\times (0,+\infty)&=\bigcup_{i\in \mathbb{Z}} \widehat{O}_{i}\nonumber\\
&=\bigcup_{i\in \mathbb{Z}}\big(\widehat{O_i}
\big\backslash \widehat{O_{i+1}}\big)\nonumber\\
&=:\bigcup_{i\in \mathbb{Z}} T_{i},
\end{align}
where
\begin{align*}
T_{i}:=\widehat{O_i}
\big\backslash {\widehat{O_{i+\!1}}}.
\end{align*}

Note that for each $i\in \mathbb{Z}$,  $O_i$ is open set with $\mu(O_i)<\infty$.
By Lemma \ref{le4.1}, we can further  decomposition $O_i$ into ``balls" of X.
More precisely,  for each $i\in \mathbb{Z}$, there exists a sequence of points
$\{\xi^k_i\}_{k=1}^\infty\in  O_i$, such that

1) \ $O_i=\cup^\infty_{k=1} B^k_i$;

2) \ $\{\frac{1}{5}B^k_i\}_{k=1}^\infty$ are disjoint, where  $B^k_i:=B(\xi_i^k, \rho_i^k/2)$
 and  $\rho_i^k:=d(\xi_i^k,{O_i}^c)$.

\noindent
For any $E\subset X$, we define  the ``cone" of $E$ by
\begin{align}\label{definition of cone}
{\mathcal R}(E):=\{(y,t): d(y,E)<2t\}.
\end{align}
For every $k=0, 1, 2, \cdots, $ we set
\begin{eqnarray}\label{gg}
{\mathcal R}(B^0_i):=\emptyset,\ \ \
T_i^k:=T_i\cap  \big({\mathcal R}(B^k_i)\backslash \cup_{j=0}^{k-1}{\mathcal R}(B^j_i)\big).
\end{eqnarray}
 It is easy to see that
$
\widehat{O_i}\subset  \cup_{j\in \mathbb{N}} {\mathcal R}(B^j_i)$ and $
 T_i^k\cap T_{i'}^{k'}=\emptyset$ if $k\neq k'$ or $ i\neq i'$.
By \eqref{g} and \eqref{gg}, we can obtain a further decomposition for $X\times (0,+\infty)$ as follows:
 \begin{align}\label{eoo}
X\times (0,+\infty)
 &=\bigcup_{i\in \mathbb{Z}}\bigcup_{k\in \mathbb{N}}\Big( T_{i} \cap {\mathcal R}(B^k_i)\Big)\nonumber\\
 &=\bigcup_{i\in \mathbb{Z}}\bigcup_{k\in \mathbb{N}}\left(
 T_i\cap  \big({\mathcal R}(B^k_i)\backslash \cup_{j=0}^{k-1}{\mathcal R}(B^j_i)\big)\right)\nonumber\\
&= \bigcup_{i\in \mathbb{Z}}\bigcup_{k\in \mathbb{N}} T_i^k.
\end{align}
By (\ref{e4.1}), this leads us to  write
 \begin{align}\label{e4.3}
f&=\sum_{ i\in \mathbb{Z},k\in \mathbb{N}}c_{\Psi}\int_0^\infty
\Psi(t\sqrt{L})\Big( \chi_{T_i^k}t^2Le^{-t^2L}f\Big)\, \frac{{\rm d}t}{t}\nonumber\\[4pt]
&= : \sum_{i\in \mathbb{Z},k\in \mathbb{N}}  \lambda_{i}^k a^k_{i},
\end{align}
where $\lambda_{i}^k:= 2^{i}{\mu(B_{i}^k)}^{1/p}, \    a_{i}^k:=L^Mb_{i}^k$, and
$$
b_{i}^k:=(\lambda_{i}^k)^{-1} c_{\Psi}\int_0^\infty t^{2M} \Phi
(t\sqrt{L})\Big( \chi_{T_{i}^k}t^2Le^{-t^2L}f\Big)\,
\frac{{\rm d}t}{t}.
$$
We see that the sum (\ref{e4.3}) converges in $L^2(X)$. Indeed, since
for each $f\in L^2(X)$,
$$\left(\int_{\UHRN} |t^2Le^{-t\sqrt{L}}f(y)|^2 \, \frac{{d\mu(y)}dt}{t}\right)^{1/2}\leq C \|f\|_{L^2(X)}.
$$
By (\ref{e4.3}),
 \begin{eqnarray*}
\left\|\sum_{|i|>N_1, k>N_2}  \lambda_{i}^k a_{i}^k\right\|_{L^2(X)}&=&c_{\Psi}\left\|\sum_{|i|>N_1, k>N_2}\int_{\UHRN}K_{
(t^2L)^{M }\Phi({t\sqrt{L}})}(x,y) \chi_{T_{i}^k}(y,t) t^2Le^{-t\sqrt{L}}f(y) \, \frac{{d\mu(y)}dt}{t}\right\|_{L^2(X)} \\
&\leq& \sup\limits_{\|g\|_2\leq 1} \sum_{|i|>N_1, k>N_2}\int_{T_{i}^k}
\big|(t^2L)^{M }\Phi({t\sqrt{L}})g(y) t^2Le^{-t\sqrt{L}}f(y)\big| \, \frac{{d\mu(y)}dt}{t}\\
&\leq &C\left(\sum_{|i|>N_1, k>N_2}\int_{T_{i}^k}  |t^2Le^{-t\sqrt{L}}f(y)|^2 \, \frac{{d\mu(y)}dt}{t}\right)^{1/2} \to 0
\end{eqnarray*}
as $N_1\to \infty, N_2\to \infty.$

Next, we will show that, up to a normalization by a  multiplicative constant,
the $a_{i}^k$ are $(p, \infty, M)$-atoms. Once the claim is established, we
  have
\begin{align*}
\sum_{i\in \mathbb{Z},k\in \mathbb{N}}|\lambda_{i}^k|^p
 =  \sum_{i\in \mathbb{Z},k\in \mathbb{N}}2^{ip}\mu(B_i^k)
&\leq 5^n \sum_{i\in \mathbb{Z},k\in \mathbb{N}}2^{ip}\mu(\frac{1}{5}B_i^k)\\
&\leq C\sum_{i\in \mathbb{Z}}2^{ip}\mu(O_i)\\
&\leq C\|f\|^p_{\HMAX}
\end{align*}
as desired.

Let us now prove   that for every $i\in \mathbb{Z}$ and $k\in \mathbb{N}$, the function
$C^{-1} a_{i}^k$
 is a $(p, \infty, M)$-atom associated with the ball $B(\xi_i^k,5\rho_i^k)$ for some  constant $C$.
Observe that if $(y,t)\in T_{i}^k$, then $B(y,4t)\in O_i$.    It implies that
$4t\leq  d(y,(O_i)^c)$. Note that $d(y, B_i^k)<2t$, then $d(y, (O_i)^c)\leq  d(y, B_i^k)+ 2\rho_i^k<2t+2\rho_i^k$.
Hence, we have that $t<\rho_i^k$.
From the formula \eqref{e2.3},   it is easy to see that the integral kernel
 $K_{(t^{2}L)^k\Phi(t\sqrt{L})}$ of the operator $(t^{2}L)^k\Phi(t\sqrt{L})$ satisfies
$$
{\rm supp}\, K_{(t^{2}L)^k\Phi(t\sqrt{L})} \subseteq
\big\{(x,y)\in\mathbb{R}^n\times\mathbb{R}^n: |x-y|\leq t\big\}.
$$
\noindent It   follows that $d(x, B_i^k)\leq d(x,y)+d(y, B_i^k)<3t\leq 3\rho_i^k$ and  $d(x, \xi_i^k)<4\rho_i^k$.
Hence,  for every $j=0,1, \cdots, M$
 $$
{\rm supp}\, \big(L^j b_{i}^k\big) \subseteq B(\xi_i^k,4\rho_i^k).
$$
It remains to show that   $\big\|(\rho_i^k)^2L)^{j}b^k_{i}\big\|_{L^\infty(X)}\leq C (\rho_i^k )^{2M}
\mu(B_{i}^k)^{-1/p},\ j=0,1, \cdots, M$.

In this case $j=0,1,\cdots, M-1$,
  it reduces to show
\begin{align}\label{e4.4}
\Big|\int_0^\infty\int_{\R} K_{
t^{2M}L^j\Phi(t\sqrt{L})}(x,y) \chi_{T_{i}^k}(y,t)t^2Le^{-t^2L}f(y) \,{d\mu(y)}
\frac{{\rm d}t}{t}\Big|\leq C2^i (\rho_i^k)^{2(M-j)}.
\end{align}
Indeed,  if $\chi_{T_{i}^k}(y,t)=1$,
then $(y,t)\in (\widehat{O_{i+1}})^c$, and so $B(y,4t)\cap (O_{i+1})^c\neq \emptyset$.
Let  $\bar{x}\in B(y,4t)\cap (O_{i+1})^c$. We have that $|t^2Le^{-t^2L}f(y)|\leq
{\mathscr M}_{L}f(\bar{x})\le 2^{i+1}$. By (i) of Lemma~\ref{le2.1},
\begin{align*}
&\Big|\int_0^\infty\int_{\R} K_{t^{2M}L^j\Phi(t\sqrt{L})}(x,y) \chi_{T_{i}^k}(y,t)
t^2Le^{-t^2L}f(y) \,{d\mu(y)}\frac{{\rm d}t}{t}\Big|\\
&\leq C2^i \Big|\int_0^{\rho_i^k} t^{2(M-j)}\int_{\R}
\big|K_{(t^2L)^j\Phi(t\sqrt{L})}(x,y)\big|  \,{d\mu(y)} \frac{{\rm d}t}{t}\Big|\\
&\leq C2^i \int_0^{\rho_i^k} t^{2(M-j)} \frac{{\rm d}t}{t}\\
&\leq C2^i(\rho_i^k)^{2(M-j)}
\end{align*}
since   $j=0,1,\cdots, M-1$.

Next, let us consider this case $j=M$. we will show  that for every $i\in \mathbb{Z},k\in \mathbb{N}$,
\begin{align}\label{e4.5}
\Big|\int_0^\infty\!\!\int_{X}
K_{\Psi(t\sqrt{L})}(x,y) \chi_{T_{i}^k}(y,t)t^2Le^{-t^2L}f(y) \,{d\mu(y)}\frac{{\rm d}t}{t}\Big|\leq C2^{i}.
\end{align}
To prove  (\ref{e4.5}), we need the following result, which plays a crucial
role in the proof of (ii) of Theorem~\ref{th1.1}.

\begin{lemma}\label{le4.2}
  Fix $x\in O_i$, the properties of the set defining  $\chi_{T_{i}^k}(y,t)$ imply that
  there exist intervals $(0,b_0),$ $(a_1,b_1), \cdots , (a_N, +\infty)$, where  $0<b_0\leq
a_1<b_1\leq \cdots \leq a_N<+\infty$, $1\leq N\leq 4$ such that  for $j=0,1, \cdots, N-1$,
there hold   $a_{j+1}\leq 3^4b_j$   and

\smallskip

\begin{itemize}
\item[(a)] $K_{\Psi(t\sqrt{L})}(x,y) \chi_{T_{i}^k}(y,t)=0$ for $t>a_N$;

\smallskip

\item[(b)]    either $K_{\Psi(t\sqrt{L})}(x,y) \chi_{T_{i}^k}(y,t)=0$   or $K_{\Psi(t\sqrt{L})}(x,y)
\chi_{T_{i}^k}(y,t)=K_{\Psi(t\sqrt{L})}(x,y)$  for all $t\in (a_j, b_j)$;

\item[(c)]    either $K_{\Psi(t\sqrt{L})}(x,y) \chi_{T_{i}^k}(y,t)=0$   or $K_{\Psi(t\sqrt{L})}(x,y)
\chi_{T_{i}^k}(y,t)=K_{\Psi(t\sqrt{L})}(x,y)$  for all  $t\in (0,b_0)$.
\end{itemize}
\end{lemma}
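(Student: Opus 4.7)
The plan is to fix $x\in O_i$ and analyze the indicator $\chi_{T_i^k}(y,t)$ as a function of $(y,t)$ restricted to $d(x,y)\le t$, which is the only range where $K_{\Psi(t\sqrt L)}(x,y)$ can be nonzero by \eqref{e2.22}. Unfolding \eqref{g} and \eqref{gg}, the membership $(y,t)\in T_i^k$ amounts to the conjunction of four scalar conditions on $y$:
\[
(C_1)\ d(y,O_i^c)\ge 4t,\quad (C_2)\ d(y,O_{i+1}^c)<4t,\quad (C_3)\ d(y,B_i^k)<2t,\quad (C_4)\ d(y,E)\ge 2t,
\]
where $E:=\bigcup_{j=0}^{k-1}B_i^j$.

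For each $\ell\in\{1,2,3,4\}$ I would apply $|d(y,F)-d(x,F)|\le d(x,y)\le t$ to produce a transition window $[\alpha_\ell,\beta_\ell]$ with $\beta_\ell\le 3\alpha_\ell$ --- explicitly $(\rho_i(x)/5,\rho_i(x)/3)$, $(\rho_{i+1}(x)/5,\rho_{i+1}(x)/3)$, $(d(x,B_i^k)/3,d(x,B_i^k))$, and $(d(x,E)/3,d(x,E))$ --- within which the truth of $(C_\ell)$ can vary as $y$ ranges over $\{y:d(x,y)\le t\}$, but outside of which $(C_\ell)$ is uniformly true or uniformly false in $y$. Hence for $t$ in the complement of $\mathcal I:=\bigcup_{\ell=1}^4[\alpha_\ell,\beta_\ell]$, every $(C_\ell)$ is constant in $y$, so $\chi_{T_i^k}(y,t)$ is constant in $y$, and the product $K_{\Psi(t\sqrt L)}(x,y)\chi_{T_i^k}(y,t)$ is either identically zero or equal to $K_{\Psi(t\sqrt L)}(x,y)$. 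This yields conclusions (b) and (c).

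Next I would decompose $\mathcal I$ into its maximal connected components, of which there are at most four; writing these as $[b_0,a_1],[b_1,a_2],\ldots,[b_{N-1},a_N]$ with $N\le 4$ supplies the intervals in the statement. Since each component is a chain of at most $m\le 4$ overlapping windows, each of ratio $\le 3$, a telescoping estimate gives $a_{j+1}\le 3^m b_j\le 3^4 b_j$. For conclusion (a), I would observe that $(C_1)$ fails uniformly once $t>\rho_i(x)/3$; enlarging $a_N$ if necessary to dominate $\rho_i(x)/3$ then forces the product to vanish on $(a_N,\infty)$.

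The main obstacle I anticipate is that $(C_4)$ hides an infinite family of constraints, one for each $j<k$, which threatens to generate infinitely many transition windows. The decisive trick is to collapse them into the single scalar condition $d(y,E)\ge 2t$ via the pooled set $E=\bigcup_{j<k}B_i^j$, so that the entire family contributes only one transition window; without this pooling step the bound $N\le 4$ cannot be maintained.
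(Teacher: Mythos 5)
Your proof is correct and follows essentially the same route as the paper: the factorization of $\chi_{T_i^k}$ into four monotone conditions (including the crucial pooling $\bigcup_{j<k}\mathcal{R}(B_i^j)=\mathcal{R}\bigl(\bigcup_{j<k}B_i^j\bigr)$, which is exactly the paper's set $E_i^{k-1}$), one transition window of ratio at most $3$ per condition via $|d(y,F)-d(x,F)|\le d(x,y)\le t$, and the complement of the union of the four windows giving $N\le 4$. The only differences are cosmetic (your window endpoints $\rho/3$ versus the paper's $\rho/2$ in the tent cases), and you actually spell out the telescoping argument for $a_{j+1}\le 3^4 b_j$ that the paper merely asserts.
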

\begin{proof}
Recall that for any set $E\subset X$,  ${\mathcal R}(E)$ is given in \eqref{definition of cone}.  It is easy to see that for every set $E_1 \subset X$ and $E_2\subset X$,
there holds
$
{\mathcal R}(E_1) \cup {\mathcal R}(E_2)={\mathcal R}(E_1\cup E_2).
$
 One can write
\begin{eqnarray*}
{\mathcal R}(B^k_i)\backslash \bigcup_{j=0}^{k-1}{\mathcal R}(B^j_i)
&=&\bigcup_{j=0}^{k}{\mathcal R}(B^j_i)\backslash \bigcup_{j=0}^{k-1}{\mathcal R}(B^j_i)\\[2pt]
&=&{\mathcal R}\left(\bigcup_{j=0}^{k}B^j_i\right) \backslash {\mathcal R}\left(\bigcup_{j=0}^{k-1}B^j_i\right)\\[2pt]
&:=&{\mathcal R} (E_i^k)\backslash {\mathcal R} (E_i^{k-1}),
\end{eqnarray*}
which, in combination with
 $T_i^k=T_i\cap ({\mathcal R}(B^k_i)\backslash \bigcup_{j=1}^{k-1}{\mathcal R}(B^j_i))$(see \eqref{gg}),
  gives

\begin{align}\label{e4.6}
\chi_{T_{i}^k}(y,t)&=\chi_{\widehat{O_i}}(y,t)\cdot\chi_{(\widehat{O_{i+\!1}})^c}(y,t)
\cdot \chi_{{\mathcal R}(E_i^k)}(y,t) \cdot \chi_{({\mathcal R}(E_i^{k-1}))^c}(y,t)\nonumber\\
&=:\prod_{\ell=1}^4 \chi_\ell(y,t).
\end{align}
From \eqref{e4.6}, we know  that if $\chi_{T_{i}^k}(y,t)=1$, then
$\chi_\ell(y,t)=1 $ for all $\ell=1, 2, 3, 4$. That is,   if either of $\chi_\ell(y,t)=0$, then $\chi_{T_{i}^k}(y,t)=0$.

 To prove Lemma~\ref{le4.2}, we claim  that for $\ell=1,2,3,4$,  there exist  numbers $b^{(\ell)}$ and
$a^{(\ell+1)}$, $0<b^{(\ell)}\leq a^{(\ell+1)}, \ a^{(\ell+1)}\leq 3b^{(\ell)}$  such that
 either $K_{\Psi(t\sqrt{L})}(x,y) \chi_{\ell}(y,t)=0$   or
 $K_{\Psi(t\sqrt{L})}(x,y) \chi_{\ell}(y,t)=K_{\Psi(t\sqrt{L})}(x,y)$  for all $t$ in each
 of the intervals complementary to $(b^{(\ell)}, a^{(\ell+1)})$. And for at least one of
 $\chi_\ell(y,t)$,    $K_{\Psi(t\sqrt{L})}(x,y) \chi_{\ell}(y,t)=0$ for $\ell> a^{(\ell+1)}$.
 By \eqref{e4.6}, we see that
$$
K_{\Psi(t\sqrt{L})}(x,y)\chi_{T_{ij}}(y,t)= \prod_{\ell=1}^{4}\chi_\ell(y,t)
K_{\Psi(t\sqrt{L})}(x,y)
$$
 equals   $K_{\Psi(t\sqrt{L})}(x,y)$ or $0$ when $t$ is in  each of the intervals complementary
to  $\cup_{\ell=1}^{4} (b^{(\ell)}, a^{(\ell+1)})$. From this,   Lemma~\ref{le4.2} follows readily.

We now  prove our claim. Fix $x\in O_i$ and $d(x,y)<t$.  Let us consider the following four cases.

\medskip

\noindent
{\bf Case 1:    $\chi_{1}(y,t)=\chi_{\widehat{O_i}}(y,t)=1$}.

\smallskip

 In this case, we  choose $b^{(1)}= {1\over 5}d(x, {O_i}^c)$ and $a^{(2)}= {1\over 2}d(x, {O_i}^c)$,
 and so  $a^{(2)}\leq 3 b^{(1)}$.
  If $t<b^{(1)}$,  then
$
d(y, {{O_i}^c})\geq d(x, {{O_i}^c})-d(x,y)> 5t-t=4t.
$
This tells us
$$K_{\Psi(t\sqrt{L})}(x,y) \chi_{\widehat{O_i}}(y,t)=K_{\Psi(t\sqrt{L})}(x,y),  \quad  {\rm for}  \ \    t< b^{(1)}.
$$
On the other hand, if $t>a^{(2)}$, then
$
d(y, {{O_i}^c})\leq  d(x,{{O_i}^c})+d(x,y)<4t.
$
From this, we have
$$K_{\Psi(t\sqrt{L})}(x,y) \chi_{\widehat{O_i}}(y,t)=0,  \quad  {\rm for},  \ \  t>a^{(2)}.
$$

\medskip

\noindent
{\bf Case 2:   $\chi_{2}(y,t)=\chi_{(\widehat{O_{i+\!1}})^c}(y,t)=1$}.

\smallskip

In this case, we consider two cases: $d(x,{O_{i+1}}^c)=0$ and $d(x,{O_{i+1}}^c)>0$.

\medskip

\noindent
{\bf Subcase 2.1:} \ $d(x,{O_{i+1}}^c)=0$.

\smallskip

It follows that
$
d(y, {{O_{i+1}}^c})\leq  d(x,{{O_{i+1}}^c})+d(x,y)<t<4t.
$
Hence,
$$K_{\Psi(t\sqrt{L})}(x,y) \chi_{(\widehat{O_{i+\!1}})^c}(y,t)=K_{\Psi(t\sqrt{L})}(x,y),   \quad   {\rm for} \  \  t>0.
$$
So we can choose  $b^{(2)}$ and $a^{(3)}$ to be any positive number. For example, we
let  $b^{(2)}=b^{(1)}$ and $a^{(3)}=a^{(2)}$.

\smallskip

\noindent
{\bf Subcase 2.2:}   $d(x,{O_{i+1}}^c)>0$.

\smallskip
Let us choose  $b^{(2)}=\frac{1}{5}d(x, {{O_{i+1}}^c})$
and $a^{(3)}=\frac{1}{2}d(x, {{O_{i+1}}^c})$.
 If  $t<b^{(2)}$, then
$
d(y, {{O_{i+1}}^c})\geq d(x,{{O_{i+1}}^c})-d(x,y)> 5t-t=4t.
$
which gives
$$K_{\Psi(t\sqrt{L})}(x,y) \chi_{(\widehat{O_{i+\!1}})^c}(y,t)=0  \quad {\rm for}   \ \  t< b^{(2)},
$$
If $t> a^{(3)}$, then
$
d(y, {{O_{i+1}}^c})\leq  d(x,{{O_{i+1}}^c})+d(x,y)< 4t.
$
Therefore,
$$K_{\Psi(t\sqrt{L})}(x,y) \chi_{(\widehat{O_{i+\!1}})^c}(y,t)=K_{\Psi(t\sqrt{L})}(x,y),  \quad {\rm for}   \ \   t>a^{(3)}.
$$

\medskip

\noindent
{\bf Case 3:    $\chi_{3}(y,t)=\chi_{{\mathcal R}(E_i^k)}(y,t)=1$}.

\smallskip

In this case, we consider two cases: $d(x, E_i^k)=0$ and $d(x, E_i^k)>0$.

\smallskip

\noindent
{\bf Subcase 3.1:}  $d(x, E_i^k)=0$.

\smallskip

  It follows that
$
d(y, {E_i^k})\leq  d(x,y)<t<2t.
$
Hence,
$$K_{\Psi(t\sqrt{L})}(x,y) \chi_{{\mathcal R}(E_i^k)}(y,t)=K_{\Psi(t\sqrt{L})}(x,y),   \quad   {\rm for} \  \  t>0.
$$
In this case, we can choose  $b^{(3)}$ and $a^{(4)}$ to be any positive number. For example,
we let $b^{(3)}=b^{(1)}$ and $a^{(4)}=a^{(2)}$.

\smallskip

\noindent
{\bf Subcase 3.2:}   $d(x, E_i^k)>0$.

\smallskip

We   choose $b^{(3)}=d(x, E_i^k)/3$ and $a^{(4)}=d(x, E_i^k)$.  If  $t<b^{(3)} $, then
$
d(y, E_i^k)\geq d(x, E_i^k)-d(x,y)> 3t-t=2t.
$
Hence,
$$K_{\Psi(t\sqrt{L})}(x,y) \chi_{{\mathcal R}(E_i^k)}(y,t)=0  \quad {\rm for}   \ \  t<b^{(3)}.
$$
If $t>a^{(4)}$, then
$
d(y, E_i^k)\leq  d(x,E_i^k)+d(x,y)< 2t.
$
This tells us that for $   t>a^{(4)},$
$$K_{\Psi(t\sqrt{L})}(x,y) \chi_{{\mathcal R}(E_i^k)}(y,t)=K_{\Psi(t\sqrt{L})}(x,y).
$$

\smallskip

\noindent
{\bf Case 4:    $\chi_{4}(y,t)=\chi_{({\mathcal R}(E_i^{k-1}))^c}(y,t)=1$}.

\smallskip

In this case, we consider two cases: $d(x, E_i^{k-1})=0$ and $d(x, E_i^{k-1})>0$.

\smallskip

\noindent
{\bf Subcase 4.1:}   $d(x, E_i^{k-1})=0$.

\smallskip

 It follows that
$
d(y, E_i^{k-1})\leq  d(x,y)<t<2t.
$
Hence,
$$K_{\Psi(t\sqrt{L})}(x,y) \chi_{({\mathcal R}(E_i^{k-1}))^c }(y,t)=0,   \quad   {\rm for} \  \  t>0.
$$
We  let $b^{(4)}$ and $a^{(5)}$ be any positive number. For example,
we choose $b^{(4)}=b^{(1)}$ and $a^{(5)}=a^{(2)}$.

\smallskip

\noindent
{\bf Subcase 4.2:}   $d(x, E_i^{k-1})>0$.

\smallskip

 We  choose $b^{(4)}=d(x, E_i^{k-1})/3$
and  $a^{(5)}=d(x, E_i^{k-1})$.  If  $t< b^{(4)}$, then
$
d(y, E_i^{k-1})\geq d(x,E_i^{k-1})-d(x,y)> 3t-t=2t.
$
This tells us
$$K_{\Psi(t\sqrt{L})}(x,y) \chi_{({\mathcal R}(E_i^{k-1}))^c }(y,t)=K_{\Psi(t\sqrt{L})}(x,y)
 \quad {\rm for}   \ \  t< b^{(4)}.
$$
If $t> d(x, E_i^{k-1})$, then
$
d(y,E_i^{k-1})\leq  d(x, E_i^{k-1})+d(x,y)< 2t.
$
Therefore,
$$K_{\Psi(t\sqrt{L})}(x,y) \chi_{ ({\mathcal R}(E_i^{k-1}))^c}(y,t)=0,  \quad {\rm for}   \ \   t>a^{(5)}.
$$

From   {\bf Cases 1, 2, 3} and  {\bf 4}  above,  we  have obtained our claim, and then
  the proof of  Lemma  \ref{le4.2} is complete.
\end{proof}

\medskip

\noindent {\it Back to the proof of (ii) of Theorem~\ref{th1.1}.}
\ We continue to  show   \eqref{e4.5}.
  \ Note that the conditions $d(x,y)<t$ and $B(y,4t)\in O_i$
imply that $x\in O_i$.  If $x\in {O_i}^c$, then
$$ \int_0^\infty\!\!\int_{{X}}
K_{\Psi(t\sqrt{L})}(x,y) \chi_{T_{i}^k}(y,t)t^2Le^{-t^2L}f(y) \,{d\mu(y)}\frac{{\rm d}t}{t}=0.
$$
Fix $x\in O_i$. We apply Lemma \ref{le4.2} to write
\begin{eqnarray}\label{e4.7}
 &&\hspace{-1cm}\int_0^\infty\int_{{X}}
K_{\Psi(t\sqrt{L})}(x,y) \chi_{T_{i}^k}(y,t)t^2Le^{-t^2L}f(y) \,{d\mu(y)}\frac{{\rm d}t}{t}\\
&=&\left\{\int_0^{b_0}+\sum_{l=1}^{N-1}\int_{a_l}^{b_l}\right\}\int_{{X}}
K_{\Psi(t\sqrt{L})}(x,y)\chi_{T_{i}^k}(y,t)t^2Le^{-t^2L}f(y) \,{d\mu(y)}\frac{{\rm d}t}{t} \nonumber\\
&+& \left\{\sum_{l=0}^{N-1}\int_{b_l}^{a_{l+1}}\right\} \int_{{X}}
K_{\Psi(t\sqrt{L})}(x,y)\chi_{T_{i}^k}(y,t)t^2Le^{-t^2L}f(y) \,{d\mu(y)}\frac{{\rm d}t}{t} \nonumber\\
&=&I_1(x)+I_2(x).\nonumber
\end{eqnarray}
To estimate $I_1(x)$, we note that if  $0\leq a<b\leq b_1$ or $a_l\leq a<b\leq b_l$, then  one has either
$$\int_{a}^{b}\int_{{X}}
K_{\Psi(t\sqrt{L})}(x,y) \chi_{T_{i}^k}(y,t)t^2Le^{-t^2L}f(y) \,{d\mu(y)}\frac{{\rm d}t}{t}=0,
$$
or by \eqref{e4.2},
\begin{eqnarray*}
 \int_{a}^{b}\int_{{X}}
K_{\Psi(t\sqrt{L})}(x,y) \chi_{T_{i}^k}(y,t)t^2Le^{-t^2L}f(y) \,{d\mu(y)}\frac{{\rm d}t}{t}
 &=&\int_{a}^{b}
\Psi(t\sqrt{L})t^2Le^{-t^2L}f(x) \,\frac{{\rm d}t}{t}\\
&=&\eta(a\sqrt{L})f(x)-\eta(b\sqrt{L})f(x).
\end{eqnarray*}
Observe that for each $a\leq t\leq b$, if $d(x,y)<t$, then $\chi_{T_{i}^k}(y,t)=1$.
This tells us that $(y,t)\in (\widehat{O_{i+1}})^c$, hence $B(y,4t)\cap \big(O_{i+1}\big)^c\neq \emptyset$.
Assume that ${\bar x}\in B(y,4t)\cap (O_{i+1})^c$. From this, we have that $d(x,\bar{x})
\leq d(x,y)+d(y,\bar{x})< 5t$ and ${\mathscr M}_{L}f(\bar{x})\leq 2^{i+1}$.
It implies that $|\eta(t\sqrt{L})f(x)|\leq {\mathscr M}_{L}f(\bar{x})\le C2^{i+1}$
for every $a\leq t\leq b$.
Therefore,  $|\eta(a\sqrt{L})f(x)|\le C2^{i+1}$ and
$|\eta(b\sqrt{L})f(x)|\le C2^{i+1}$, and so $|I_1(x)|\leq C2^{i+1}$.

Consider   $I_2(x)$.   If $\chi_{T_{i}^k}(y,t)=1$,
then $(y,t)\in (\widehat{O_{i+1}})^c$. Thus $B(y,4t)\cap (O_{i+1})^c\neq \emptyset$.
Assume that $\bar{x}\in B(y,4t)\cap (O_{i+1})^c$. We have that $|t^2Le^{-t^2L}f(y)|\leq
{\mathscr M}_{L}f(\bar{x})\le 2^{i+1}$. This, together with $a_{l+1}\leq 3^4b_l \  (l=0,1,\cdots, N-1),$
 implies that
\begin{eqnarray}\label{e4.8}
 \Big|\int_{b_l}^{a_{l+1}}\int_{{X}}
K_{\Psi(t\sqrt{L})}(x,y) \chi_{T_{i}^k}(y,t)t^2Le^{-t^2L}f(y) \,{d\mu(y)}\frac{{\rm d}t}{t}\Big|
&\leq& 2^{i+1}\Big|\int_{b_l}^{81b_l}\int_{{X}}
|K_{\Psi(t\sqrt{L})}(x,y)|  \,{d\mu(y)}\frac{{\rm d}t}{t}\Big|\nonumber\\
&\leq& C 2^{i+1} \int_{b_l}^{81b_l}\frac{1}{t}\, dt\leq  C 2^{i+1},
\end{eqnarray}
which yields that $|I_2(x)|\leq C 2^{i+1}$.

Combining  estimates of $I_1(x)$ and $I_2(x)$, we  have obtained (\ref{e4.5}).
This proves (ii) of Theorem~\ref{th1.1}, and the proof of Theorem~\ref{th1.1} is complete.
  \hfill{}$\Box$

\smallskip

  \medskip

\noindent
{\bf Remarks.} \ i) In \cite{DKKP}, the authors established the equivalence of the maximal
and atomic Hardy spaces associated to an operator $L$ on the space of homogeneous type $X$,
 under the following four assumptions that  $L$ satisfies
 ${\bf (H1)}$-${\bf (H2)}$,
and

\smallskip

\noindent
{\bf (H3)} \ the kernel $p_t(x,y)$ of the semigroup $e^{-tL}$ satisfies  the
H\"older continuity: There exists a constant $\alpha>0$
such that
$$
 |p_t(x,y)-p_t(x,y')|\leq C  \left({d(y,y')\over \sqrt{t}}\right)^{\alpha}
$$
 for $x, y, y'\in X$ and $t>0$, whenever $d(y, y')\leq \sqrt{t}$; and

 \noindent
{\bf (H4)} \ Markov property:
$$
\int_X p_t(x,y)d\mu(y)=1
$$
for $x\in X$  and $t>0.$

 \medskip

 ii) \ Let $L$ be  an operator satisfying ${\bf (H1)}$-${\bf (H2)}$. For $f\in L^2(X)$,
we define  an area function  $S_{L}f$  associated to the heat semigroup
generated by $L$,
\begin{eqnarray}\label{e5.1}
S_{L}f(X):=\left(\int_0^{\infty}\!\!\int_{d(x,y)<t}
\big|t^2Le^{-t^2L} f(y)\big|^2 {d\mu(y) dt\over   t V(y,t)}\right)^{1/2},
\quad x\in X.
\end{eqnarray}
 Given $0<p\leq 1$. The Hardy space $H^p_{L,  S}(\RR^n) $ is defined as the completion of
 $\{ f\in L^2(X):\, \|S_{L} f\|_{L^p(X)}<\infty \}
 $
 with  norm
$$
 \|f\|_{H^p_{L, S}(X)}:=\|S_{L} f\|_{L^p(X)}.
$$
From  \cite{DL}, \cite{HLMMY} and   Theorem~\ref{th1.1} above,
 $$
  H^p_{L, {\rm at}, q, M}({X}) \simeq  H^p_{L, S}(X)\simeq
  H^p_{L, {  max}}({X}) \simeq  H^p_{L, {  rad}}({X})
$$
 for every $0<p\leq 1$, and for all $q>p$ with $1\leq q\leq \infty$ and   all integers $M> {n\over 2}({1\over p}-1)$.

\bigskip

\noindent
{\bf Acknowledgments.}
 L. Song is supported by  NNSF of China (Grant Nos. 11471338 and ~11521101) and  Natural Science Funds for Distinguished Young Scholar of Guangdong Province.   L.  Yan is  supported by
 NNSF of China (Grant Nos.  11371378 and  ~11521101).

\bigskip

\end{document}